\theoremstyle{plain}
\newtheorem{thm}{Theorem}[section]
\newtheorem{cor}[thm]{Corollary}
\newtheorem{lem}[thm]{Lemma}
\theoremstyle{definition}
\newtheorem{defn}[thm]{Definition}
\newtheorem{conj}[thm]{Conjecture}
\newtheorem{exmp}[thm]{Example}
\theoremstyle{remark}
\newtheorem{rem}[thm]{Remark}
\def\lp{\mathcal{L}\text{-}\mathcal{P}}
\def\rl{\mathbb{R}}
\def\cn{\mathbb{C}}
\newcommand{\bea}{\begin{eqnarray*}} 
\newcommand{\eea}{\end{eqnarray*}}  
\newcommand{\bnum}{\begin{enumerate}}
\newcommand{\enum}{\end{enumerate}}
\newcommand{\bit}{\begin{itemize}}  
\newcommand{\eit}{\end{itemize}}
\newcommand{\beq}{\begin{equation}}  
\newcommand{\eeq}{\end{equation}}
\renewcommand{\exists}{\text{there exists }}
\renewcommand{\forall}{\text{for all }}
\begin{document} 
\title[A conjecture of I. Krasikov]{Discrete analogues of the Laguerre inequalities and a conjecture of I. Krasikov}

\author[G.~Csordas]{George CSORDAS}
\address{Department of Mathematics,University of Hawaii, Honolulu, HI 96822 }
\email{george@math.hawaii.edu}

\author[M.~Chasse]{Matthew CHASSE}
\address{ Department of Mathematics,University of Hawaii, Honolulu, HI 96822 }
\email{chasse@math.hawaii.edu}

\keywords{Laguerre inequality,
discrete polynomials, orthogonal polynomials, Laguerre inequalities}
\subjclass[2000]{ Primary 26D05; Secondary 30C10 }

\begin{abstract}
A conjecture of I. Krasikov is proved.  
Several discrete analogues of classical polynomial inequalities are derived, along with results which allow extensions to a class of transcendental entire functions in the Laguerre-P\'olya class.
\end{abstract}

\maketitle

\section{ Introduction}
The classical Laguerre inequality for polynomials states that a polynomial of degree $n$ with only real zeros, $p(x)\in\rl[x]$, satisfies
$(n-1) p'(x)^2 - n p''(x)p(x)\ge 0$
for all $x\in\rl$ (see \cite{CC,Lo}).  Thus, the classical Laguerre inequality is a necessary condition for a polynomial to have only real zeros.  Our investigation is inspired by an interesting paper of I. Krasikov \cite{K1}.  He proves several discrete polynomial inequalities, including useful versions of generalized Laguerre inequalities \cite{P2}, and shows how to apply them by obtaining bounds on the zeros of some Krawtchouk polynomials.  In \cite{K1}, I. Krasikov conjectures a new discrete Laguerre inequality for polynomials.  After establishing this conjecture, we generalize the inequality to transcendental entire functions (of order $\rho<2$, and minimal type of order $\rho=2$) in the Laguerre-P\'olya class (see Definition \ref{lp}). 

\begin{defn}
A real entire function $\varphi(x)=\sum_{k=0}^{\infty}\frac{\gamma_k}{k!}x^k$ is said to belong to the 
\emph{Laguerre-P\'olya class}, written $\varphi\in \lp$, if it can be expressed in the form
$$\varphi(x) = cx^me^{-ax^2 + bx}\prod_{k=1}^\omega\left(1+\frac{x}{x_k}\right)e^{\frac{-x}{x_k}} \text{\hspace{5 mm}} (0\le\omega\le\infty),$$
where $b,c,x_k \in \mathbb{R}$, $m$ is a non-negative integer, $a\ge 0$, $x_k\neq 0$, and 
$\sum_{k=1}^\omega\frac{1}{x_k^2}<\infty$.
\label{lp}
\end{defn}

The significance of the Laguerre-P\'olya class stems from the fact that functions in this class, \emph{and only these}, are uniform limits, on compact subsets of $\cn$, of polynomials with only real zeros \cite[Chapter VIII]{L}.

\begin{defn}
We denote by $\lp_n$ the set of polynomials of degree $n$ in the Laguerre-P\'olya class; that is, $\lp_n$ is the set of polynomials of degree $n$ having only real zeros.
\end{defn}

The minimal spacing between neighboring zeros of a polynomial in $\lp_n$ is a scale that provides a natural criterion for the validity of discrete polynomial inequalities.   
\begin{defn}
Suppose $p(x)\in\lp_n$ has zeros $\{\alpha_k\}_{k=1}^n$, repeated according to their multiplicities, and ordered such that $\alpha_k\le \alpha_{k+1}$, $1\le  k\le n-1$.  We define the \emph{mesh size}, associated with the zeros of $p$, by 
$$\mu(p) := \min_{1\le k \le n-1} |\alpha_{k+1} - \alpha_{k}|.$$   
\end{defn}

With the above definition of mesh size, we can now state a conjecture of I. Krasikov, which is proved in Section \ref{kcnjpf}. 
\begin{conj}(I. Krasikov \cite{K1})
\label{kcnj}
If $p(x)\in\lp_n$ and $\mu(p)\ge 1$, then 
\beq
(n-1)[p(x+1)-p(x-1)]^2 - 4np(x)[p(x+1)-2p(x)+p(x-1)] \ge 0
\label{kineq}
\eeq
holds for all $x\in\rl$.
\end{conj}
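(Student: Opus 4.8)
The plan is to view the left-hand side of \eqref{kineq} as a discretization of the classical Laguerre expression $(n-1)p'(x)^2 - np''(x)p(x)$ and to recover the desired nonnegativity from the zero structure of $p$. Writing $p(x) = c\prod_{k=1}^n (x-\alpha_k)$ with $\alpha_{k+1}-\alpha_k \ge 1$, I would first reduce to a pointwise statement: fix $x\in\rl$ and set $a_k := x-\alpha_k$, so that $p(x\pm 1) = c\prod_k (a_k \pm 1)$. The three quantities $p(x)$, $p(x+1)-p(x-1)$, and $p(x+1)-2p(x)+p(x-1)$ should then be expressed through the elementary symmetric functions of the $a_k$, or more usefully through the logarithmic-derivative-type sums; the natural discrete analogues of $p'/p$ and $(p'/p)'$ that appear here are $S_1 := \tfrac12\bigl(\tfrac{p(x+1)-p(x-1)}{p(x)}\bigr)$ and a second-difference combination. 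The inequality \eqref{kineq}, after dividing by $p(x)^2$ (handling the zeros of $p$ separately by continuity), becomes a quadratic-type inequality in these discrete symmetric quantities.

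Next I would isolate the one-variable mechanism. For a single factor $(x-\alpha)$ one checks directly that $(x-\alpha-1)(x-\alpha+1) = (x-\alpha)^2 - 1 \le (x-\alpha)^2$, which is exactly where the hypothesis $\mu(p)\ge 1$ must enter when the factors are combined: the product $\prod_k(a_k^2-1)$ must be controlled against $\prod_k a_k^2$ and against the cross terms, and the spacing condition is what prevents two consecutive $a_k$ from both being small in absolute value. Concretely, if $|a_j| < 1$ for some $j$ then $|a_k| = |a_j + (\alpha_j-\alpha_k)| \ge |\alpha_j - \alpha_k| - |a_j| > |k-j| - 1$ for the neighbors, which quarantines the "bad" factor. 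I expect the cleanest route is to prove the inequality first for $n$ real zeros that are \emph{exactly} equally spaced at distance $1$ (a Charlier/Krawtchouk-type configuration, essentially the extremal case Krasikov had in mind), perhaps via a generating-function or hypergeometric identity, and then show that increasing any gap beyond $1$ only increases the left-hand side. This monotonicity-in-the-gaps step is the technical heart.

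For that monotonicity step, I would fix all zeros but one, say $\alpha_n = t$, regard the left-hand side $F(t)$ of \eqref{kineq} as a function of $t$ on an interval where the spacing constraint is not yet active, and analyze $F'(t)$ (or the sign of $F$ directly) using the fact that $p(x\pm1)$ are linear in $t$. The outcome I would aim for is that $F(t)$, as a polynomial in $t$, has all real zeros or is otherwise sign-definite on the admissible region, so that its minimum over the constraint set $\{\mu(p)\ge 1\}$ is attained on the boundary, i.e.\ at a configuration with some gaps equal to $1$. Iterating collapses everything to the equally spaced case. The main obstacle, I expect, is precisely this reduction to the boundary: unlike the continuous Laguerre inequality, whose proof is a short calculation with $p'/p = \sum 1/(x-\alpha_k)$, here the discrete differences do not factor through a sum over zeros, so one cannot simply differentiate and invoke convexity. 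I anticipate needing an auxiliary identity expressing \eqref{kineq} as a sum of squares plus a manifestly nonnegative remainder that is visibly controlled by $\mu(p)\ge 1$ — finding the right such identity (quite possibly the one Krasikov's continuous-limit heuristic suggests, namely $(n-1)p'^2-npp'' \le \text{(discrete version)}$ with an error term of size $O(\mu^{-2}-1)$ relative quantities) is where the real work lies.
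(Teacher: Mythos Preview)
Your proposal is a strategy outline, not a proof, and the central steps --- the equally-spaced base case and the monotonicity-in-the-gaps reduction --- are both left undone. The monotonicity claim in particular is stated as a hope (``I would aim for'', ``I anticipate needing'') with no mechanism; you yourself identify it as ``the technical heart'' and then do not supply it. As written, nothing rigorous has been established.

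More importantly, you have misdiagnosed the obstruction. You write that ``the discrete differences do not factor through a sum over zeros, so one cannot simply differentiate and invoke convexity.'' In fact the opposite is true, and this is precisely the lever the paper uses. The forward and backward discrete logarithmic derivatives
\[
F(x)=\frac{p(x+1)-p(x)}{p(x)}=\sum_{k=1}^n\frac{A_k}{x-\alpha_k},\qquad
R(x)=\frac{p(x)-p(x-1)}{p(x)}=\sum_{k=1}^n\frac{B_k}{x-\alpha_k},
\]
do admit partial-fraction expansions over the zeros, with $A_k=p(\alpha_k+1)/p'(\alpha_k)$ and $B_k=-p(\alpha_k-1)/p'(\alpha_k)$; the mesh condition $\mu(p)\ge1$ is exactly what forces $A_k,B_k\ge0$. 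Once you have this, the proof runs in close parallel to the continuous case: $\sum A_k=n$ and Cauchy--Schwarz give $F(x)^2\le -nF'(x)$, and integrating this differential inequality over a unit interval (together with the observation that the zeros of $p(x+1)-p(x)$ are themselves spaced $\ge1$ apart) produces the estimate $\tfrac1n F(y)R(y)\le (p(y)^2-p(y+1)p(y-1))/p(y)^2$ on the relevant range. An algebraic identity then converts this into \eqref{kineq}. A separate sign argument (via the classical Laguerre inequality) disposes of the case $p(x)\notin[\min,\max]\{p(x-1),p(x+1)\}$. No variational reduction to the equally-spaced configuration is needed.
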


The classical Laguerre inequality is found readily by differentiating the logarithmic derivative of a polynomial $p(x)$ with only real zeros $\{\alpha_i\}_{i=1}^n$, to give
\beq
\frac{p''(x)p(x) - (p'(x))^2}{(p(x))^2} = \left(\frac{p'(x)}{p(x)}\right)' = \left(\sum_{k=1}^n \frac{1}{(x-\alpha_k)}\right)' = - \sum_{k=1}^n \frac{1}{(x-\alpha_k)^2}. 
\label{lineq}
\eeq
 Since the right-hand side is non-positive, 
$$ (p'(x))^2 - p''(x)p(x) \ge 0. $$
  This inequality is also valid for an arbitrary function in $\lp$ \cite{CC}.  A sharpened form of the Laguerre inequality for polynomials can be obtained with the Cauchy-Schwarz inequality,
\beq
\left(\sum_{k=1}^n \frac{1}{(x-\alpha_k)}\right)^2 \le n\sum_{k=1}^n \frac{1}{(x-\alpha_k)^2}.
\label{csineq}
\eeq
In terms of $p$, \eqref{csineq} becomes $\left(\frac{p'(x)}{p(x)}\right)^2 \le n\sum_{k=1}^n \frac{1}{(x-\alpha_k)^2}$, and
with \eqref{lineq} yields
the sharpened version of the Laguerre inequality for polynomials on which Conjecture \ref{kcnj} is based,
\beq
 (n-1)(p'(x))^2 - np''(x)p(x) \ge 0.
\eeq

The inequality \eqref{kineq} is a finite difference version of the classical Laguerre inequality for polynomials. Indeed, let us define
\beq
f_n(x,h,p):=(n-1)[p(x+h)-p(x-h)]^2 - 4np(x)[p(x+h)-2p(x)+p(x-h)].
\label{fndef}
\eeq
Then \eqref{kineq} can be written as $f_n(x,1,p)\ge0$ ($x\in\rl$), and we recover the classical Laguerre inequality for polynomials by taking the following limit:
\bea
\lim_{h\to0} \frac{f_n(x,h,p)}{4h^2} &=&   (n-1)\left(\lim_{h\to0} \frac{p(x+h)-p(x-h)}{2h}\right)^2\\
                                   & & \text{ \hspace{5mm} } - np(x)\left( \lim_{h\to0} \frac{p(x+h)-2p(x)+p(x-h)}{h^2}\right)\\
&=& (n-1) p'(x)^2 - n p''(x)p(x).
\eea

As I. Krasikov points out, the motivation for inequalities of type \eqref{kineq} is that classical discrete orthogonal polynomials $p_k(x)$ satisfy a three-term difference equation (see \cite[p. 27]{N}, \cite{K1})
$$p_k(x+1)=b_k(x)p_k(x) - c_k(x)p_k(x-1),$$
 where $b_k(x)$ and $c_k(x)$ are continuous over the interval of orthogonality.  Many of the classical discrete orthogonal polynomials satisfy the condition that $c_k(x)>0$ on the interval of orthogonality, and this implies that $\mu(p)\ge 1$ (see \cite{K4}).  Therefore, inequalities when $\mu(p)\ge 1$ are of interest and may help provide sharp bounds on the loci of zeros of discrete orthogonal polynomials \cite{K1,FK,FK2}.  Indeed, W. H. Foster, I. Krasikov, and A. Zarkh have found bounds on the extreme zeros of many orthogonal polynomials using discrete and continuous Laguerre and new Laguerre type inequalities which they discovered \cite{FK,FK2,FK3,K1,K2,K3,K4}.

In this paper, we prove I. Krasikov's conjecture (see Theorem \ref{mainthm}), extend it to a class of transcendental entire functions in the Laguerre-P\'olya class, and formulate several conjectures  (cf. Conjecture \ref{zspcconj}, Conjecture \ref{msrconj}, Conjecture \ref{msrconjcon}, and Conjecture \ref{lpcnj}).  In Section \ref{kcnjpf}, we establish several preliminary results about polynomials which satisfy a zero spacing requirement. 
In Section \ref{extlp}, we establish the existence of a polynomial sequence which satisfies a zero spacing requirement and converges uniformly on compact subsets of $\cn$ to the exponential function.  We use this result to extend a version of \eqref{kineq} to transcendental entire functions in the Laguerre-P\'olya class up to order $\rho=2$ and minimal type, and conjecture that it is true for all functions in $\lp$.

\section{Proof of I. Krasikov's Conjecture} 
\label{kcnjpf}

In this section we develop some discrete analogues of classical inequalities, form some intuition about the effect of imposing a minimal zero spacing requirement on a polynomial in $\lp$, and prove Conjecture \ref{kcnj}.  First, note that one can change the zero spacing requirement in Conjecture \ref{kcnj} by simply rescaling in $x$.  For example, the following conjecture is equivalent to Conjecture \ref{kcnj} of Krasikov.

\begin{conj}
Let $p(x)\in\lp_n$.
Suppose that $\mu(p)\ge h > 0$. Then for all $x\in\rl$,
\beq                                                 
f_n(x,h,p) = (n - 1)[p(x + h) - p(x - h)]^2 - 4np(x)[p(x+h) -2p(x) + p(x-h)] \ge 0.
\eeq
\end{conj}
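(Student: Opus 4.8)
The plan is to prove the displayed inequality directly, first reducing to $h=1$ and then to a Cauchy--Schwarz-type estimate in which the spacing hypothesis $\mu(p)\ge h$ is the decisive ingredient. Rescaling $x\mapsto hx$ sends a polynomial with mesh $\ge h$ to one with mesh $\ge 1$, so the statement reduces to Conjecture \ref{kcnj}, and I may assume $h=1$ and $\mu(p)\ge 1$. Writing $A=p(x+1)$, $B=p(x)$, $C=p(x-1)$ and setting $\delta p=\tfrac12(A-C)$, $\delta^2 p=A-2B+C$, one has $f_n(x,1,p)=4\big[(n-1)(\delta p)^2-n\,B\,\delta^2 p\big]$ from \eqref{fndef}. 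The algebraic identity $(n-1)(\delta p)^2-n\,B\,\delta^2 p=n\,W_d-(\delta p)^2$, with $W_d:=(\delta p)^2-B\,\delta^2 p$, is the exact finite-difference analogue of the continuous decomposition $(n-1)(p')^2-np''p=nW-(p')^2$ underlying the sharpened Laguerre inequality; thus the whole problem becomes the discrete Cauchy--Schwarz inequality $(\delta p)^2\le n\,W_d$.

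At any point with $B\neq 0$ I would divide by $B^2$ and introduce $u_k=1/(x-\alpha_k)$. From $A/B=\prod_k(1+u_k)$ and $C/B=\prod_k(1-u_k)$ one obtains $\delta p/B=\sum_{j\text{ odd}}e_j(u)=:O$ and $\delta^2 p/(2B)=\sum_{j\text{ even},\,j\ge 2}e_j(u)=:E$, where $e_j$ is the $j$th elementary symmetric function. The target inequality $(\delta p)^2\le nW_d$ is then the purely algebraic statement $(n-1)O^2\ge 2nE$. Its continuous limit (replace $u_k$ by $hu_k$, divide by $h^2$, let $h\to 0$) is precisely the Cauchy--Schwarz inequality $(\sum 1/t_k)^2\le n\sum 1/t_k^2$, and the discrete obstruction is the presence of the higher symmetric functions $e_3,e_4,\dots$. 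Crucially, the hypothesis $\mu(p)\ge 1$ says exactly that the numbers $t_k:=x-\alpha_k=1/u_k$ are spaced at least $1$ apart, so the large reciprocals $u_k$ cannot cluster; a quick test case (two nearly coincident zeros forcing $u_1,u_2$ simultaneously large) shows that without this spacing $(n-1)O^2\ge 2nE$ genuinely fails, pinpointing where the hypothesis must be spent.

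The heart of the proof, and the step I expect to be hardest, is establishing $(n-1)O^2\ge 2nE$ under the constraint that the $t_k$ are $\ge 1$ apart. I would attack this by reduction to an extremal configuration: fixing $x$ and regarding $f_n$ as a function of the admissible zero set, I would show by a variational argument that the minimum of $f_n$ over such sets is attained when the gaps are as small as the constraint allows, so that it suffices to verify \eqref{kineq} on configurations with unit gaps (the case that is sharp and connects to the Krawtchouk-type examples motivating the conjecture). An alternative is induction on $n$, peeling off the outermost zero $\alpha_n$ and controlling the new cross terms it contributes to $O$ and $E$ by means of $|t_n-t_{n-1}|\ge 1$; bookkeeping these cross terms is the technical crux. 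Either way I would first record the preliminary lemmas about polynomials with $\mu(p)\ge 1$ (monotonicity of the relevant ratios, and the behaviour of $W_d$ and of $O,E$ under a unit shift of a single zero).

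Finally I would dispose of the points where the division by $B$ is invalid. Since $f_n(x,1,p)$ is a polynomial in $x$, at any zero of $p$ one has $B=0$, whence $f_n=4(n-1)(A-C)^2\ge 0$ automatically, while elsewhere the identity above together with the core inequality gives $f_n\ge 0$. Continuity in both $x$ and the zeros (the configurations with distinct, $\ge 1$-spaced zeros are dense in the admissible set) then yields the inequality for every $p\in\lp_n$ with $\mu(p)\ge 1$ and all $x\in\rl$, and rescaling back recovers the general-$h$ statement.
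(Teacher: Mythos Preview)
Your algebraic reformulation is correct and useful: the identity $f_n=4(nW_d-(\delta p)^2)$ with $W_d=(\delta p)^2-B\,\delta^2 p$, and the translation into $(n-1)O^2\ge 2nE$ via $u_k=1/(x-\alpha_k)$, are all valid, and you correctly handle the case $B=0$. But the proposal does not actually prove the theorem: the inequality $(n-1)O^2\ge 2nE$ is simply the original inequality $f_n\ge 0$ rewritten, and the two strategies you sketch for it (a variational reduction to unit-spaced configurations, or induction by peeling off the outermost zero) are outlines, not arguments. Neither is routine. The variational claim in particular is not obviously true---it is far from clear that $f_n$, viewed as a function of the zero configuration at a fixed $x$, is minimized at the unit-spaced extremal set; one would need to control the sign of $\partial f_n/\partial\alpha_j$ under the spacing constraint, which depends delicately on where $x$ sits relative to the zeros. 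The inductive bookkeeping you allude to is not set up at all.

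The paper's proof is structurally quite different and contains an idea absent from your sketch. It also splits off the easy case (your $E\le 0$, handled by Lemma~\ref{sgnlm}), but for the remaining ``monotone'' case $\min\{A,C\}<B<\max\{A,C\}$ it does not attack the symmetric-function inequality head-on. Instead it introduces the one-sided discrete logarithmic derivatives $F(x)=(p(x+1)-p(x))/p(x)$ and $R(x)=(p(x)-p(x-1))/p(x)$, proves that their partial-fraction coefficients are nonnegative when $\mu(p)\ge 1$ (this is where the spacing hypothesis is spent, via the explicit formula $A_k=p(\alpha_k+1)/p'(\alpha_k)$), applies Cauchy--Schwarz in the form $F^2\le -nF'$, and then \emph{integrates} this differential inequality over $[x-1,x]$ to obtain the product bound $\tfrac{1}{n}F(x)R(x)\le (B^2-AC)/B^2$. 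A separate chain of lemmas (\ref{dsctzr}, \ref{spclem}) shows that in the monotone case the interval $[x-1,x]$ contains no zero of $F$, so the integration is legitimate. This integration step---converting a pointwise Cauchy--Schwarz bound on $F'$ into a relation between the values of $F$ at $x$ and $x-1$---is the essential mechanism that your symmetric-function formulation, which sees only the single point $x$, has no analogue for.
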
 
For the sake of clarity, we will work with \eqref{kineq} directly ($h=1$), and keep in mind that we can always make statements about polynomials with an arbitrary positive minimal zero spacing by rescaling $p(x)$ (in other words ``measuring $x$ in units of $h$'').

\begin{lem}
A local minimum of a polynomial, $p(x)\in\lp_n$, with only real simple zeros, is negative.  Likewise, a local maximum of $p(x)$ is positive.
\label{nnm}
\end{lem}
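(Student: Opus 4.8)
The plan is to use the factorization $p(x) = c\prod_{k=1}^n (x-\alpha_k)$ with $c\neq 0$ and distinct real zeros $\alpha_1 < \alpha_2 < \cdots < \alpha_n$, together with the fact that at a critical point $p'(x_0)=0$ one has the Laguerre-type identity from \eqref{lineq}: since $p'(x_0)=0$, equation \eqref{lineq} gives $p''(x_0)p(x_0)/p(x_0)^2 = -\sum_{k=1}^n 1/(x_0-\alpha_k)^2 < 0$ (the sum is strictly negative because $x_0$, being a critical point, cannot equal any simple zero $\alpha_k$, as $p$ and $p'$ have no common root). Hence $p''(x_0)p(x_0) < 0$, so $p(x_0)$ and $p''(x_0)$ have opposite signs. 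At a local minimum $p''(x_0)\ge 0$; I would rule out $p''(x_0)=0$ (if both $p'(x_0)=p''(x_0)=0$ then $p(x_0)=0$ by the identity, but a simple zero is not a critical point, contradiction), so $p''(x_0)>0$ and therefore $p(x_0)<0$. The local-maximum case is symmetric, giving $p(x_0)>0$.

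First I would record the factorization and the observation that simple zeros are never critical points, so every critical point $x_0$ satisfies $p(x_0)\neq 0$ and the sum $\sum 1/(x_0-\alpha_k)^2$ is a finite strictly positive quantity. Next I would substitute $p'(x_0)=0$ into \eqref{lineq} to obtain $p''(x_0)p(x_0) = -p(x_0)^2\sum_{k=1}^n 1/(x_0-\alpha_k)^2 < 0$. Then I would invoke the second-derivative test: a local minimum forces $p''(x_0)\ge 0$, which combined with $p''(x_0)p(x_0)<0$ forces $p''(x_0)>0$ and $p(x_0)<0$; dually for a local maximum. Finally I would remark that this is exactly the intuitive statement that the graph of such a $p$ oscillates about the $x$-axis, crossing transversally at each zero.

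Alternatively, and perhaps more cleanly, I would avoid the second-derivative test entirely: between consecutive zeros $\alpha_k,\alpha_{k+1}$ the polynomial $p$ has constant sign, and on the unbounded intervals $(-\infty,\alpha_1)$ and $(\alpha_n,\infty)$ it also has constant sign; any local extremum of $p$ lies strictly inside one of these intervals (a simple zero cannot be an extremum since $p'\neq 0$ there), and one checks by a sign-change count that on any interval where $p>0$ the interior critical points can only be local maxima, while on any interval where $p<0$ they can only be local minima — because $p'$ changes sign from $+$ to $-$ at a max and from $-$ to $+$ at a min, and Rolle/interlacing pins down the pattern. I expect the main (only) obstacle to be the small bookkeeping needed to justify that a critical point cannot coincide with a zero and that $p''(x_0)\neq 0$ at an extremum; both follow immediately from simplicity of the zeros via the identity \eqref{lineq}, so the lemma is essentially a one-line consequence of that identity once these degeneracies are dismissed.
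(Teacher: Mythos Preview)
Your proposal is correct and follows essentially the same line as the paper's proof: both evaluate the Laguerre inequality (equivalently, the identity \eqref{lineq}) at a critical point $x_0$ to obtain $p''(x_0)p(x_0)\le 0$, and then use the sign of $p''$ at an extremum together with simplicity of the zeros to get a strict sign on $p(x_0)$. The only cosmetic difference is in excluding $p''(x_0)=0$: the paper argues that $p'$ can have no multiple zero (via Rolle/interlacing), whereas you read it off directly from the strict inequality $p''(x_0)p(x_0)<0$ that \eqref{lineq} gives once you know $p(x_0)\neq 0$.
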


\begin{proof}
Because $p(x)$ is a polynomial on $\rl$ with simple zeros, at a local minimum ($x_{min}$, $p(x_{min})$), we have that $p'(x_{min})=0$ and $p''(x_{min})> 0$ (because $p''(x_{min})=0$ would imply that $p'$ has a multiple zero at $x_{min}$ which is not possible).  The classical Laguerre inequality asserts that if $p(x)\in\lp$, then for all $x\in\rl$, $(p'(x))^2 - p''(x)p(x) \ge 0$.  At a local minimum this expression becomes $- p''(x_{min})p(x_{min}) \ge 0$.  Therefore, at a local minimum we have $p(x_{min}) \le 0$. Since the zeros of $p$ are simple, $p(x_{min})\neq 0$. Thus $p(x_{min})<0$. The second statement of the lemma can be proved the same way, or by considering $-p$ and using the first statement.
\end{proof}

A statement similar to Lemma \ref{nnm} is proved by G. Csordas and A. Escassut \cite[Theorem 5.1]{C} for a class of functions whose zeros lie in a horizontal strip about the real axis.
\renewcommand\theenumi{\roman{enumi}}

\begin{lem}
Let $p(x)\in\lp_n$, $n\ge2$, $\mu(p)\ge 1$. 
\begin{enumerate}
\item If $p(x-1)>p(x)$ and $p(x+1)>p(x)$, then $p(x)<0$. \label{i1}
\item If $p(x-1)<p(x)$ and $p(x+1)<p(x)$, then $p(x)>0$. \label{i2}
\end{enumerate}

\label{sgnlm}

\end{lem}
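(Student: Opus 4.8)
The plan is to prove statement (i) directly, by contradiction, and then obtain (ii) by applying (i) to $-p$, which again lies in $\lp_n$ with the same zeros, hence the same mesh. So assume $p(x-1)>p(x)$ and $p(x+1)>p(x)$. I would first record the elementary observation that $\mu(p)\ge 1$ forces every zero of $p$ to be simple: a zero of multiplicity $\ge 2$ would make two consecutive entries of the ordered zero list coincide and give $\mu(p)=0$. In particular Lemma \ref{nnm} is available for $p$, and the zeros of $p'$ interlace those of $p$, so $p$ has exactly one critical point in each bounded gap between consecutive zeros and none on the two unbounded rays $(-\infty,\alpha_1)$, $(\alpha_n,\infty)$.

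Now suppose, for contradiction, that $p(x)\ge 0$. If $p(x)=0$, then $x$ is a simple zero $\alpha_j$, so $p$ changes sign at $x$ while $p(x-1)$ and $p(x+1)$ are both positive; examining the sign of $p$ immediately to each side of $\alpha_j$ and using that the neighbouring zeros, when they exist, satisfy $\alpha_{j+1}\ge\alpha_j+1$ and $\alpha_{j-1}\le\alpha_j-1$, one finds $p(x-1)\le 0$ or $p(x+1)\le 0$, a contradiction. Hence $p(x)>0$, and $p$ is positive at each of the three points $x-1,x,x+1$. Since the sign of $p$ alternates on successive gaps, two distinct regions on which $p>0$ are separated by a region on which $p<0$, hence by a pair of consecutive zeros of $p$; but two zeros lying in an open interval of length $1$ (such as $(x-1,x)$ or $(x,x+1)$) would be less than $1$ apart, contradicting $\mu(p)\ge 1$. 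Therefore $x-1,x,x+1$ all lie in one region of $p$, either a bounded gap $(\alpha_j,\alpha_{j+1})$ or one of the two rays. By the interlacing recalled above, on that region $p$ is either strictly monotone (on a ray, where it tends to $0$ at the finite endpoint) or strictly unimodal with a single interior maximum (on a bounded gap, since $p>0$ there). In the monotone case one of the two hypotheses is reversed; in the unimodal case $p(x-1)<p(x)$ if $x$ lies at or to the left of the maximum and $p(x+1)<p(x)$ otherwise. Either way the hypotheses are contradicted, which proves (i); and (ii) follows by applying (i) to $-p$.

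The only delicate step is the passage from $p(x)\ge 0$ to ``$x-1,x,x+1$ lie in a common region'': the assumption $p(x)\ge 0$ does not by itself prevent $p$ from becoming negative between the sampled points, and it is precisely there that $\mu(p)\ge 1$ must be invoked, to exclude the extra pair of zeros such a dip would require. Everything else — simplicity of the zeros, the case $p(x)=0$, and the monotone or unimodal behaviour of $p$ on a single region — is routine, following from the classical Laguerre inequality through Lemma \ref{nnm} together with the standard interlacing of the zeros of $p$ and $p'$.
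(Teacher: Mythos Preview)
Your proof is correct and follows essentially the same approach as the paper: use $\mu(p)\ge 1$ to rule out zeros of $p$ on $[x-1,x+1]$, and then invoke Lemma~\ref{nnm} to obtain a contradiction. The only organizational difference is that the paper argues directly that $p'$ must change sign from negative to positive on $(x-1,x+1)$, producing a non-negative local minimum, whereas you reach the same contradiction via the global monotone/unimodal structure of $p$ on the positive region containing $x-1,x,x+1$; your version also treats the boundary case $p(x)=0$ explicitly, which the paper's argument glosses over.
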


\begin{proof}
\eqref{i1} Fix an $x_0\in\mathbb{R}$.  Let $p(x_0-1)>p(x_0)$, $p(x_0+1)>p(x_0)$, and assume for a contradiction that $p(x_0)\ge 0$.  There cannot be any zeros of $p(x)$ in the interval $[x_0-1,x_0]$, for if there were, $p(x_0)p(x_0-1)>0$ implies that the number of zeros in $(x_0-1,x_0)$ must be even, and this violates the zero spacing $\mu(p)\ge1$.  Similarly, there cannot be any zeros of $p(x)$ in $[x_0,x_0+1]$.  If $p(x_0)<p(x_0-1)$ and $p(x_0)<p(x_0+1)$ then there is a point in $(x_0-1,x_0+1)$ where $p'$ changes sign from negative to positive.  This implies $p$ achieves a non-negative local minimum on $[x_0-1,x_0+1]$ which contradicts Lemma \ref{nnm}.   

\eqref{i2} The second statement follows by replacing $p$ with $-p$ in (\ref{i1}).  
\end{proof}

Using Lemma \ref{sgnlm} we can verify that if $p(x)<\min\{p(x+1),p(x-1)\}$, then $p(x)<0$ and thus the function 
\begin{eqnarray}
f_n(x,1,p) &=& (n-1)[p(x+1)-p(x-1)]^2 - 4np(x)[p(x+1)-2p(x)+p(x-1)] \nonumber \\
           &=& (n-1)[p(x+1)-p(x-1)]^2 \nonumber \\
           & & \quad - 4np(x)[(p(x+1)-p(x))+(p(x-1)-p(x))] \label{sgnprf}
\end{eqnarray}
has a non-negative second term and \eqref{kineq} is satisfied.  Similarly, \eqref{kineq} is valid when $p(x)>\max\{p(x-1),p(x+1)\}$. 
The proof of Conjecture \ref{kcnj} is now reduced to the case where $\min\{p(x+1),p(x-1)\}\le p(x) \le \max\{p(x+1),p(x-1)\}$.   It is easy to show that if for some $p(x)\in\lp_n$, $f_n(x,1,p)\ge0$ for all $x\in\rl$, then  for all $m\ge n$, $f_m(x,1,p)\ge0$ for all $x\in\rl$.  If $\mu(p)\ge 1$, but $m<\deg(p)$, then for some $x_0\in\rl$, $f_m(x_0,1,p)$ may be negative.  Indeed, let $p(x)=x(x-1)(x-2)$, then $f_3(x,1,p)=72(x-1)^2$ and $f_2(x,1,p)= -12(x-3)(x-1)^2(x+1)$. In particular, $f_2(4,1,p)=-540$.  

We next obtain inequalities and relations that are analogous to those used in deriving the continuous version of the classical Laguerre inequality for polynomials.

\begin{defn}Let $p(x)\in\lp_n$ have only simple real zeros $\{\alpha_k\}_{k=1}^n$.
Define forward and reverse ``discrete logarithmic derivatives'' associated with $p(x)$ by
\begin{eqnarray}
F(x)&:=& \frac{p(x+1) - p(x)}{p(x)} =: \sum_{k=1}^n \frac{A_k}{(x-\alpha_k)} \label{F}\\
 \text{and \hspace{8mm} } R(x)&:=& \frac{p(x) - p(x-1)}{p(x)} =: \sum_{k=1}^n \frac{B_k}{(x-\alpha_k)}. \label{R}
\label{FReqs}
\end{eqnarray}
Note that $\deg(p(x+1) - p(x))  < \deg(p(x))$ and $\deg(p(x) - p(x-1))  < \deg(p(x))$ permits unique partial fraction expansions of the rational functions $F$ and $R$.  Define the sequences $\{A_k\}_{k=1}^n$ and $\{B_k\}_{k=1}^n$ associated with $p(x)$ by requiring that they satisfy the equation above. 
\label{FRdf}
\end{defn}

\begin{rem}
For an arbitrary finite difference, $h$, the scaled versions of the functions in Definition \ref{FRdf} are $F(x):= \frac{p(x+h) - p(x)}{hp(x)}$ and $R(x):= \frac{p(x) - p(x-h)}{hp(x)}$.
\end{rem}

\begin{lem}
For $p(x)\in\lp_n$, $n\ge2$, with $\mu(p)\ge 1$ and zeros $\{\alpha_k\}_{k=1}^n$, the associated sequences $\{A_k\}_{k=1}^n$ and $\{B_k\}_{k=1}^n$ satisfy $A_k\ge 0$ and $B_k\ge 0$, for all $ k$, $1\le k \le n$.
\label{sgnABlem}
\end{lem}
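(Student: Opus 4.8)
The plan is to compute the partial-fraction coefficients $A_k$ and $B_k$ explicitly via the residue formula, and then read off their signs from the way the unit translates of the zeros interlace with the zeros themselves. Note first that since $\mu(p)\ge 1>0$, the zeros $\alpha_1<\alpha_2<\cdots<\alpha_n$ of $p$ are necessarily simple, so Definition \ref{FRdf} applies and $F,R$ genuinely have simple-pole partial fraction expansions. Taking the residue of $F(x)=\frac{p(x+1)-p(x)}{p(x)}$ at $x=\alpha_k$ and using $p(\alpha_k)=0$ gives
$$A_k=\lim_{x\to\alpha_k}(x-\alpha_k)\,\frac{p(x+1)-p(x)}{p(x)}=\frac{p(\alpha_k+1)}{p'(\alpha_k)},$$
and in the same way
$$B_k=\frac{p(\alpha_k)-p(\alpha_k-1)}{p'(\alpha_k)}=\frac{-p(\alpha_k-1)}{p'(\alpha_k)}.$$

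The next step is the sign analysis. Since $p(\alpha_k)=0$ and $p'(\alpha_k)\ne 0$, the quantity $p(t)/p'(\alpha_k)$ is positive for $t$ slightly larger than $\alpha_k$, and because $p$ has no zero in the open interval $(\alpha_k,\alpha_{k+1})$ it stays positive throughout that interval (for $k=n$, throughout $(\alpha_n,\infty)$). The hypothesis $\mu(p)\ge 1$ says precisely that $\alpha_k+1\le\alpha_{k+1}$, so $\alpha_k+1$ either equals $\alpha_{k+1}$ — in which case $p(\alpha_k+1)=0$ and $A_k=0$ — or lies in $(\alpha_k,\alpha_{k+1})$, in which case $A_k=p(\alpha_k+1)/p'(\alpha_k)>0$. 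The argument for $B_k$ is symmetric: $p(t)/p'(\alpha_k)$ is negative for $t$ slightly less than $\alpha_k$ and remains negative on $(\alpha_{k-1},\alpha_k)$ (on $(-\infty,\alpha_1)$ when $k=1$), while $\mu(p)\ge 1$ gives $\alpha_k-1\ge\alpha_{k-1}$, so $\alpha_k-1\in[\alpha_{k-1},\alpha_k)$ and hence $B_k=-p(\alpha_k-1)/p'(\alpha_k)\ge 0$.

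I do not expect a genuine obstacle here: the entire content is the observation that $\mu(p)\ge 1$ forces the translate $\alpha_k\pm 1$ to land in the closed adjacent gap between consecutive zeros, where $p$ has a sign determined (up to vanishing) by $\operatorname{sign}p'(\alpha_k)$. The only points needing care are the boundary indices $k=1$ and $k=n$, where one adjacent gap is an unbounded half-line but the sign reasoning is unchanged, and the degenerate case $\alpha_k+1=\alpha_{k+1}$ (respectively $\alpha_k-1=\alpha_{k-1}$), which simply produces a zero coefficient — this is exactly why the conclusion is the non-strict $A_k,B_k\ge 0$ rather than strict positivity.
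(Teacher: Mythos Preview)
Your proof is correct and follows essentially the same approach as the paper: both compute $A_k=p(\alpha_k+1)/p'(\alpha_k)$ and $B_k=-p(\alpha_k-1)/p'(\alpha_k)$ and then use the zero spacing $\mu(p)\ge 1$ to conclude that $\alpha_k\pm 1$ lands in the adjacent gap where the sign of $p$ relative to $p'(\alpha_k)$ is determined. Your treatment of the boundary indices and the degenerate case $\alpha_k+1=\alpha_{k+1}$ is slightly more explicit than the paper's, but the arguments are otherwise the same.
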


\begin{proof}
From Definition \ref{FRdf} we have
$$ p(x+1) - p(x) = \sum_{k=1}^n \frac{A_k}{(x-\alpha_k)} p(x)  = \sum_{k=1}^n \left[A_k \prod_{j\neq k}(x-\alpha_j)\right].$$
Evaluating this at a zero of $p$ yields
$p(\alpha_k + 1) = A_k\prod_{j\neq k}(\alpha_k-\alpha_j) =  A_kp'(\alpha_k)$.

Thus, 
$$ A_k = \frac{p(\alpha_k +1)}{p'(\alpha_k)} 
\text{\hspace{3mm} and similarly \hspace{3mm} }
 B_k = \frac{- p(\alpha_k -1)}{p'(\alpha_k)}.$$

Since the zeros of $p$ are simple, for some neighborhood of $\alpha_k$, $U(\alpha_k)$,
\bea
x\in U(\alpha_k)\text{, }x<\alpha_k &\text{ implies }& p(x)p'(x)<0 \\
\text{ and \hspace{8mm}}x\in U(\alpha_k)\text{, }x>\alpha_k &\text{ implies }& p(x)p'(x)>0. 
\eea
Since the zeros are spaced at least 1 unit apart, $p(\alpha_k+1)$ is either $0$ or has the same sign as $p(x)$ for $x>\alpha_k$ on $U(\alpha_k)$. So for all $\varepsilon >0$ sufficiently small,  
$p(\alpha_k +1)p'(\alpha_k  + \varepsilon )\ge 0$, and by continuity $p(\alpha_k +1)p'(\alpha_k)\ge 0$.  Thus $A_k = \frac{p(\alpha_k +1)}{p'(\alpha_k)} \ge 0 $. Note $p'(\alpha_k)\neq 0$ since $\alpha_k$ is simple. Likewise, $p(\alpha_k -1)$ is either $0$ or has the same sign as $p'(x)$ for $x<\alpha_k$ on $U(\alpha_k)$.  Hence for all $\varepsilon >0$ sufficiently small, 
$p(\alpha_k -1)p'(\alpha_k  - \varepsilon )\le 0$.  By continuity, $p(\alpha_k -1)p'(\alpha_k)\le 0$, whence $B_k\ge0$.

\end{proof}

\begin{exmp}
If the zero spacing requirement in Lemma \ref{sgnABlem} is violated then some $A_k$ or $B_k$ may be negative.  Indeed, consider $p(x)=x(x+1-\varepsilon )$. Then 
$\frac{p(x+1)-p(x)}{p(x)} = \frac{A_1}{x} + \frac{A_2}{x+1-\varepsilon },$
where 
$$ A_1 = \frac{2-\varepsilon }{1-\varepsilon }  \text{ \hspace{5mm} } A_2 = \frac{-\varepsilon }{1-\varepsilon }. $$
For any positive $\varepsilon <1$, $\mu(p)= 1-\varepsilon $, and $A_2$ is negative.
\end{exmp}

\begin{cor}
For $p(x)\in\lp_n$, $n\ge2$, with $\mu(p)\ge 1$, the associated functions $F(x)$ and $R(x)$ (see Definition \ref{FRdf}) satisfy $F'(x)<0$ and $R'(x)<0$ on their respective domains.
\label{FRdec}
\end{cor}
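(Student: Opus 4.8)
The plan is to read the conclusion directly off the partial fraction representations in Definition \ref{FRdf}, combined with the sign information from Lemma \ref{sgnABlem}. First I would observe that the hypothesis $\mu(p)\ge 1$ is in particular strictly positive, so $p$ cannot have a repeated zero; hence $p$ has only simple real zeros $\{\alpha_k\}_{k=1}^n$, and the expansions $F(x)=\sum_{k=1}^n A_k/(x-\alpha_k)$ and $R(x)=\sum_{k=1}^n B_k/(x-\alpha_k)$ from Definition \ref{FRdf} are legitimate (the degree drops $\deg(p(x\pm1)-p(x))<\deg p$ ensuring uniqueness of these expansions). Lemma \ref{sgnABlem} then gives $A_k\ge 0$ and $B_k\ge 0$ for every $k$ with $1\le k\le n$.

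Next I would differentiate termwise on the common domain $\rl\setminus\{\alpha_1,\dots,\alpha_n\}$, obtaining
$F'(x)=-\sum_{k=1}^n A_k/(x-\alpha_k)^2$ and $R'(x)=-\sum_{k=1}^n B_k/(x-\alpha_k)^2$.
Since each summand is nonnegative, both derivatives are $\le 0$ at every point of the domain, so the only remaining point is to upgrade this to strict inequality. Equality at some $x$ would force $A_k=0$ for all $k$ (respectively $B_k=0$ for all $k$), i.e. $F\equiv 0$ (respectively $R\equiv 0$), which means $p(x+1)=p(x)$ (respectively $p(x-1)=p(x)$) identically in $x$; this is impossible for a nonconstant polynomial, and $\deg p=n\ge 2$. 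Hence at least one $A_{k_0}>0$ and at least one $B_{j_0}>0$, so for every $x$ in the domain the sums $\sum_k A_k/(x-\alpha_k)^2$ and $\sum_k B_k/(x-\alpha_k)^2$ are strictly positive, yielding $F'(x)<0$ and $R'(x)<0$.

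I do not expect a genuine obstacle here; the result is essentially a discrete restatement of the computation in \eqref{lineq}, with the residues $A_k,B_k$ in the role played there by the constants $1$. The only two steps that merit an explicit sentence are: (a) noting that the mesh hypothesis already guarantees simple zeros, so that Definition \ref{FRdf} and Lemma \ref{sgnABlem} genuinely apply; and (b) observing that the $A_k$ (resp. $B_k$) cannot all vanish, which is precisely what turns ``$\le 0$'' into ``$<0$''.
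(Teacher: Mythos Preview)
Your proposal is correct and follows essentially the same approach as the paper, which simply says the result ``is a direct result of differentiating the partial fraction expressions for $F$ and $R$ and applying Lemma \ref{sgnABlem}.'' Your write-up is in fact more careful than the paper's: you make explicit that $\mu(p)\ge 1$ forces simple zeros (so Definition \ref{FRdf} applies), and you supply the argument that not all $A_k$ (resp.\ $B_k$) can vanish, which is exactly what upgrades $\le 0$ to the stated strict inequality $<0$.
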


\begin{proof}
This corollary is a direct result of differentiating the partial fraction expressions for $F$ and $R$ and applying Lemma \ref{sgnABlem}.
\end{proof}

Note that the degree of the numerator of $F(x)$ is $n-1$.  If  $\mu(p)\ge 1$, then $F(x)$ has $n-1$ real zeros, because $F(x)$ is strictly decreasing between any two consecutive poles of $F(x)$.  This proves the following lemma. 

\begin{lem}{\rm (P\'olya and Szeg\"o \cite[vol. II, p. 39]{PSz2})}
For $p(x)\in\lp_n$, $n\ge2$, with $\mu(p)\ge 1$, $F(x)$ and $R(x)$ have only real simple zeros.
\label{FRrlz}
\end{lem}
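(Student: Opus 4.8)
The plan is to read off the conclusion directly from Corollary~\ref{FRdec} together with a pole/zero count, exactly as the paragraph immediately preceding the statement suggests. First I would recall that $F(x)=\frac{p(x+1)-p(x)}{p(x)}$, where $\deg(p(x+1)-p(x))=n-1$ (the leading terms cancel, and the coefficient of $x^{n-1}$ does not, since it equals $n$ times the leading coefficient of $p$). Hence $F$ is a rational function with exactly $n$ simple poles, at the zeros $\alpha_1<\alpha_2<\dots<\alpha_n$ of $p$, and its numerator has degree exactly $n-1$, so $F$ has at most $n-1$ zeros counted with multiplicity.

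Next I would use Corollary~\ref{FRdec}: since $\mu(p)\ge 1$, every $A_k\ge 0$ and in fact not all are zero (the numerator is not the zero polynomial), so $F'(x)=-\sum_{k=1}^n \frac{A_k}{(x-\alpha_k)^2}<0$ on each of the $n-1$ bounded intervals $(\alpha_k,\alpha_{k+1})$ between consecutive poles. On such an interval, $F$ is continuous and strictly decreasing, with $F(x)\to+\infty$ as $x\downarrow\alpha_k$ and $F(x)\to-\infty$ as $x\uparrow\alpha_{k+1}$ (the signs of the residues $A_k>0$, or a limiting argument if some residue vanishes—here one checks $A_k$ cannot vanish for an \emph{interior} pole because $p(\alpha_k+1)\ne 0$ when $\mu(p)\ge1$ strictly, but since we only claimed $\mu(p)\ge 1$ one may have $A_k=0$; in that case the pole at $\alpha_k$ is removable and the argument is applied to the merged interval). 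In any event, by the intermediate value theorem $F$ has at least one zero in each of these $n-1$ intervals, and by strict monotonicity exactly one, all simple. That accounts for $n-1$ distinct real simple zeros, which exhausts the degree of the numerator; therefore $F$ has \emph{only} real simple zeros. The identical argument applies to $R(x)=\frac{p(x)-p(x-1)}{p(x)}$ using $B_k\ge 0$.

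The main obstacle, such as it is, is the bookkeeping around the boundary case where some residue $A_k$ (or $B_k$) is zero, i.e.\ when $p(\alpha_k\pm1)=0$: then a pole is actually absent and one has fewer than $n$ genuine poles, so a naive ``one zero strictly between each pair of consecutive poles'' count would overshoot. I would handle this cleanly by first passing to the reduced rational function — cancel the common factors $(x-\alpha_k)$ from numerator and denominator of $F$ — and then run the monotonicity-plus-IVT argument on the genuine poles that remain, noting the total zero count still matches the (now possibly lower) numerator degree. Since the referenced result is attributed to P\'olya and Szeg\"o, I expect the intended proof is precisely this short degree-counting argument, and no deeper input is needed.

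\begin{proof}
As noted above, $\deg\bigl(p(x+1)-p(x)\bigr)=n-1$, so the numerator of $F(x)$ has degree exactly $n-1$, and $F$ therefore has at most $n-1$ zeros (with multiplicity) in $\cn$. After cancelling any common linear factors, $F$ has $m\le n$ genuine simple poles, which we relabel $\beta_1<\beta_2<\dots<\beta_m$; the reduced numerator has degree $m-1$. By Corollary~\ref{FRdec}, $F'(x)<0$ on each interval $(\beta_j,\beta_{j+1})$, and at the genuine poles the one-sided limits of $F$ are $+\infty$ (from the right of $\beta_j$) and $-\infty$ (from the left of $\beta_{j+1}$), since the corresponding residues are strictly positive. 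Hence $F$ is continuous and strictly decreasing from $+\infty$ to $-\infty$ on each of the $m-1$ intervals $(\beta_j,\beta_{j+1})$, so it has exactly one zero there, and that zero is simple. This produces $m-1$ distinct real simple zeros, which equals the degree of the reduced numerator; consequently $F$ has only real simple zeros. The same reasoning, with $B_k\ge 0$ in place of $A_k\ge 0$, shows that $R$ has only real simple zeros.
\end{proof}
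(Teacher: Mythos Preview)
Your proof is correct and follows essentially the same approach as the paper: count the numerator degree, invoke the strict monotonicity from Corollary~\ref{FRdec}, and trap one simple real zero in each interval between consecutive poles. You are in fact more careful than the paper's one-sentence argument, since you explicitly address the boundary case $A_k=0$ (zeros exactly one unit apart) by passing to the reduced rational function, whereas the paper tacitly treats every $\alpha_k$ as a genuine pole.
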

  
In the sequel (see Lemma \ref{spclem}), we show that if $\mu(p(x))\ge 1$, then $\mu(p(x+1)-p(x))\ge 1$, and the zeros of $F(x)$ and $R(x)$ are spaced at least one unit apart.

\begin{lem}
If $p(x)\in\lp_n$, then the associated sequences $\{A_k\}_{k=1}^n$ and $\{B_k\}_{k=1}^n$ satisfy $\sum_{k=1}^n A_k = n$ and  $\sum_{k=1}^n B_k = n$. 
\label{smABlm}
\end{lem}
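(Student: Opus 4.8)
The plan is to recognize $\sum_{k=1}^n A_k$ and $\sum_{k=1}^n B_k$ as the residue sums of the rational functions $F$ and $R$ of Definition~\ref{FRdf}, that is, as the limits $\lim_{x\to\infty} xF(x)$ and $\lim_{x\to\infty} xR(x)$, and then to compute these limits directly from the defining quotients $F(x)=\frac{p(x+1)-p(x)}{p(x)}$ and $R(x)=\frac{p(x)-p(x-1)}{p(x)}$.

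First I would record the elementary degree/leading-coefficient bookkeeping for forward and backward differences: if $p(x)\in\lp_n$ has leading coefficient $c\neq 0$, then by the binomial theorem the $x^n$ terms cancel in both $p(x+1)-p(x)$ and $p(x)-p(x-1)$, and each of these is a polynomial of degree exactly $n-1$ whose leading term is $nc\,x^{n-1}$ (coming from $c[(x+1)^n-x^n]$, respectively $c[x^n-(x-1)^n]$, each of which is $nc\,x^{n-1}+O(x^{n-2})$). This is the only computation in the argument, and it is precisely what forces both sums to equal $n$ rather than some quantity depending on the lower-order coefficients of $p$.

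Next, using \eqref{F}, write $xF(x)=\frac{x\,[p(x+1)-p(x)]}{p(x)}$; by the previous step the numerator is asymptotic to $nc\,x^{n}$ and the denominator to $c\,x^{n}$ as $x\to\infty$, so $\lim_{x\to\infty}xF(x)=n$. On the other hand, from the partial fraction expansion in \eqref{F} we get $xF(x)=\sum_{k=1}^n\frac{A_k x}{x-\alpha_k}\to\sum_{k=1}^n A_k$. Equating the two limits gives $\sum_{k=1}^n A_k=n$. Repeating the identical argument for \eqref{R}, using that $p(x)-p(x-1)$ likewise has leading coefficient $nc$, yields $\sum_{k=1}^n B_k=n$.

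There is essentially no obstacle here; the statement is a routine consequence of comparing top-degree behavior. (Equivalently, one could compare the coefficients of $x^{n-1}$ on the two sides of the identity $p(x+1)-p(x)=c\sum_{k=1}^n A_k\prod_{j\neq k}(x-\alpha_j)$ obtained in the proof of Lemma~\ref{sgnABlem}, getting $nc=c\sum_{k=1}^n A_k$; the limit formulation above is just a clean repackaging of this, and it makes transparent that no simple-zero hypothesis beyond the one already implicit in Definition~\ref{FRdf} is needed.)
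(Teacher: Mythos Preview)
Your proof is correct and follows essentially the same route as the paper: compute $\lim xF(x)$ in two ways, once from the partial-fraction expansion to get $\sum_k A_k$ and once from the leading-term asymptotics of $p(x+1)-p(x)$ to get $n$, and likewise for $R$. The only cosmetic difference is that the paper takes the limit as $|z|\to\infty$ rather than along the real axis.
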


\begin{proof}
  Let $p(x) = a_nx^n +a_{n-1}x^{n-1} + \cdots + a_0 \in\lp_n$ and denote the zeros of $p(x)$ by $\{\alpha_k\}_{k=1}^n$. 
 Observe that 
\beq
\lim_{|z|\to\infty} zF(z)=\lim_{|z|\to\infty} z\left( \frac{p(z+1)-p(z)}{p(z)}\right )=\lim_{|z|\to\infty}z\sum_{k=1}^n \frac{A_k}{(z-\alpha_k)}=\sum_{k=1}^n A_k.
\label{lim1}
\eeq
Then \eqref{lim1} and
\bea
p(z+1)-p(z) &=& a_n(z+1)^n + a_{n-1}(z+1)^{n-1} + \ldots + a_0 -[a_nz^n + a_{n-1}z^{n-1} + \ldots + a_0]\\
            &=& na_{n}z^{n-1} + O(z^{n-2}), \text{ } |z|\to\infty,
\eea
 imply that
$$\sum_{k=1}^n A_k = \lim_{|z|\to\infty} zF(z)= \lim_{|z|\to\infty} z\left( \frac{p(z+1)-p(z)}{p(z)}\right )= \lim_{|z|\to\infty} z\left( \frac{na_nz^{n-1} + O(z^{n-2})}{ a_nz^n +a_{n-1}z^{n-1} + \cdots + a_0)}\right) = n. $$
A similar argument shows that $\sum_{k=1}^n B_k = n$.
\end{proof}

\begin{lem}
Given $p(x)\in\lp_n$, $n\ge2$, with $\mu(p)\ge 1$, the associated functions $F(x)$ and $R(x)$ satisfy $(F(x))^2\le-nF'(x)$ and $(R(x))^2\le-nR'(x)$, for all $x\in\rl$, where $p(x)\neq 0$.
\label{cslem}
\end{lem}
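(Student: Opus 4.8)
The plan is to mimic the passage from \eqref{lineq}--\eqref{csineq} to the sharpened classical Laguerre inequality, replacing the logarithmic derivative $p'/p$ by the discrete logarithmic derivatives $F$ and $R$. Since $\mu(p)\ge1$ together with $n\ge2$ forces the zeros $\{\alpha_k\}_{k=1}^n$ of $p$ to be simple, Definition \ref{FRdf} applies and we have the partial fraction expansions $F(x)=\sum_{k=1}^n A_k/(x-\alpha_k)$ and $R(x)=\sum_{k=1}^n B_k/(x-\alpha_k)$. Differentiating termwise, $-F'(x)=\sum_{k=1}^n A_k/(x-\alpha_k)^2$ and $-R'(x)=\sum_{k=1}^n B_k/(x-\alpha_k)^2$, so the two asserted inequalities are precisely
\[
\Bigl(\sum_{k=1}^n \frac{A_k}{x-\alpha_k}\Bigr)^{2}\le n\sum_{k=1}^n \frac{A_k}{(x-\alpha_k)^{2}}
\qquad\text{and}\qquad
\Bigl(\sum_{k=1}^n \frac{B_k}{x-\alpha_k}\Bigr)^{2}\le n\sum_{k=1}^n \frac{B_k}{(x-\alpha_k)^{2}},
\]
valid at any real $x$ with $p(x)\ne0$ (so that no $\alpha_k$ sits in a denominator).

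The key point is that Lemma \ref{sgnABlem} gives $A_k\ge0$ and $B_k\ge0$, so that $\sqrt{A_k}$ and $\sqrt{B_k}$ are real, while Lemma \ref{smABlm} gives $\sum_{k=1}^n A_k=\sum_{k=1}^n B_k=n$. Applying the Cauchy--Schwarz inequality to the $n$-vectors $\bigl(\sqrt{A_k}\bigr)_k$ and $\bigl(\sqrt{A_k}/(x-\alpha_k)\bigr)_k$,
\[
\Bigl(\sum_{k=1}^n \frac{A_k}{x-\alpha_k}\Bigr)^{2}
=\Bigl(\sum_{k=1}^n \sqrt{A_k}\cdot\frac{\sqrt{A_k}}{x-\alpha_k}\Bigr)^{2}
\le\Bigl(\sum_{k=1}^n A_k\Bigr)\Bigl(\sum_{k=1}^n \frac{A_k}{(x-\alpha_k)^{2}}\Bigr)
=-nF'(x),
\]
which is $(F(x))^{2}\le-nF'(x)$; the identical computation with $B_k$ in place of $A_k$ yields $(R(x))^{2}\le-nR'(x)$.

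I do not expect a genuine obstacle in this lemma: the substance is already packaged in the preceding results, namely the nonnegativity of the coefficients $A_k,B_k$ (Lemma \ref{sgnABlem}) and their normalization (Lemma \ref{smABlm}). These are exactly the discrete analogues of the ingredients used in \eqref{csineq} --- that the weights there equal $1\ge0$ and number $n$ --- and the only place the hypothesis $\mu(p)\ge1$ enters is to secure $A_k,B_k\ge0$, which is what makes the square roots above legitimate. The remaining work (termwise differentiation of a partial fraction expansion, one application of Cauchy--Schwarz) is routine, so the real care was spent earlier in establishing Lemma \ref{sgnABlem}.
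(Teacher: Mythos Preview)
Your proof is correct and follows essentially the same approach as the paper's own proof: both invoke the partial fraction expansion from Definition~\ref{FRdf}, use Lemma~\ref{sgnABlem} for nonnegativity of the $A_k$ (resp.\ $B_k$), apply Cauchy--Schwarz, and finish with Lemma~\ref{smABlm} to replace $\sum A_k$ by $n$. The only cosmetic difference is that you spell out the vectors $(\sqrt{A_k})_k$ and $(\sqrt{A_k}/(x-\alpha_k))_k$ explicitly, whereas the paper leaves this implicit.
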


\begin{proof}
From  Definition \ref{FRdf}, $F(x)=\sum_{k=1}^n \frac{A_k}{x-\alpha_k}$ and therefore $F'(x)=\sum_{k=1}^n \frac{-A_k}{(x-\alpha_k)^2}$.
By Lemma \ref{sgnABlem}, $\mu(p)\ge 1$ implies the constants $A_k\ge 0$.
Using the the Cauchy-Schwarz inequality,
$$(F(x))^2 = \left(\sum_{k=1}^n \frac{A_k}{x-\alpha_k}\right)^2 \le \left(\sum_{k=1}^n A_k\right) \sum_{k=1}^n \frac{A_k}{(x-\alpha_k)^2} = -nF'(x),$$
where Lemma \ref{smABlm} has been used in the last equality.  An identical argument shows 
$(R(x))^2\le -nR'(x)$ for all $x\in\rl$. 
\end{proof}

\begin{rem}
Simple examples show that the inequalities in Lemma \ref{cslem} are sharp (consider $p(x)=x(x+1-\varepsilon )$). 
\end{rem}

\begin{lem}
Let $p(x)\in\lp_n$, $n\ge2$, with $\mu(p)\ge 1$, and let  $\{\beta_k\}_{k=1}^{n-1}$ be the zeros of $p(x+1)-p(x)$.  Let $y\in\rl$ be such that $\min\{p(y+1),p(y-1)\}<p(y)<\max\{p(y+1),p(y-1)\}$. Then if the interval $[y-1,y]$ does not contain any $\beta_k$ ,  
$$ \frac{1}{n}F(y)R(y) \le \frac{(p(y))^2 - p(y+1)p(y-1)}{(p(y))^2}.$$
\label{FRineq}
\end{lem}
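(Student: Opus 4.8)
The plan is to recast the inequality as a one‑variable estimate for $1/F$ and then integrate the Cauchy--Schwarz bound of Lemma \ref{cslem}. Throughout, $p(y)\ne0$ (implicit in the statement, so that $F,R$ are defined at $y$), and $\mu(p)\ge1$ forces the zeros of $p$ to be simple. Replacing $p$ by $-p$ leaves $F$, $R$ and both sides of the claimed inequality unchanged while interchanging the two alternatives in the hypothesis on $p(y)$, so I may assume $p(y-1)<p(y)<p(y+1)$; in particular $p(y+1)-p(y)>0$ and $p(y)-p(y-1)>0$. From $p(y+1)/p(y)=1+F(y)$ and $p(y-1)/p(y)=1-R(y)$ one has
\[
\frac{(p(y))^2-p(y+1)p(y-1)}{(p(y))^2}=1-(1+F(y))(1-R(y))=R(y)-F(y)+F(y)R(y),
\]
while $F(y)R(y)=(p(y+1)-p(y))(p(y)-p(y-1))/(p(y))^2>0$. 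Dividing the desired inequality by $F(y)R(y)$ turns it into $\frac{1}{R(y)}-\frac{1}{F(y)}\le\frac{n-1}{n}$; and since $1+F(y-1)=p(y)/p(y-1)$ gives $R(y)=F(y-1)/(1+F(y-1))$, hence $1/R(y)=1+1/F(y-1)$, it suffices to prove
\[
\frac{1}{F(y)}-\frac{1}{F(y-1)}\ge\frac{1}{n},
\]
the degenerate case $p(y-1)=0$ (where $R(y)=1$ and the claim reads $F(y)\le n$) being subsumed below.

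This last estimate I would obtain by integration. By Lemma \ref{cslem}, $\left(1/F(x)\right)'=-F'(x)/F(x)^2\ge 1/n$ at every $x$ with $p(x)\ne0$ and $F(x)\ne0$. By Lemma \ref{FRrlz} the zeros of $F$ are the $\beta_k$, so the hypothesis says $F$ has no zero in $[y-1,y]$; and $\mu(p)\ge1$ together with $p(y)\ne0$ implies $p$ has at most one zero $\alpha^{*}$ in $[y-1,y]$, necessarily in $[y-1,y)$. If $p$ has no zero in $[y-1,y]$---or if $\alpha^{*}\in(y-1,y)$ but the coefficient of $1/(x-\alpha^{*})$ in the partial‑fraction expansion of $F$ vanishes, in which case $F$ extends smoothly across $\alpha^{*}$ with $F(\alpha^{*})\ne0$ (else $\alpha^{*}$ would be a $\beta_k$)---then $1/F$ is $C^1$ on $[y-1,y]$ and integrating the derivative bound gives the claim at once. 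If $\alpha^{*}\in(y-1,y)$ is a genuine pole of $F$, then its residue, which is $\ge0$ by Lemma \ref{sgnABlem}, is positive, so $F\to+\infty$ as $x\to\alpha^{*+}$ and $F\to-\infty$ as $x\to\alpha^{*-}$; hence $1/F>0$ on $(\alpha^{*},y]$ with $1/F(x)\to0^{+}$ as $x\to\alpha^{*+}$, and $1/F<0$ on $[y-1,\alpha^{*})$ with $1/F(x)\to0^{-}$ as $x\to\alpha^{*-}$. Integrating the bound on each piece gives $1/F(y)\ge(y-\alpha^{*})/n$ and $-1/F(y-1)\ge(\alpha^{*}-y+1)/n$, which add to $1/n$. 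Finally, if $\alpha^{*}=y-1$, i.e.\ $p(y-1)=0$, then $F(y)>0$ and integrating $1/F$ over $(y-1,y]$ gives $1/F(y)\ge 1/n$, which is exactly what the claim reduces to there.

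I expect the only delicate point to be precisely that $p$ may vanish inside $[y-1,y]$, so that $1/F$ blows up in the interior of the interval of integration; the remedy is to split at that pole and invoke the sign of the residue (Lemma \ref{sgnABlem}) to see that $1/F$ tends to $0$ from the correct side there, so that the two one‑sided integrals still sum to at least $1/n$. Apart from that sign bookkeeping the proof is a direct integration of Lemma \ref{cslem}, with the constant $1/n$ matching the length $1$ of $[y-1,y]$---which is precisely where the hypothesis $\mu(p)\ge1$ is used.
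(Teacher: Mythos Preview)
Your proof is correct and follows essentially the same route as the paper: both reduce the claim to the inequality $\tfrac{1}{F(y)}-\tfrac{1}{F(y-1)}\ge\tfrac{1}{n}$ and obtain it by integrating the Cauchy--Schwarz bound $(1/F)'\ge 1/n$ from Lemma~\ref{cslem} over $[y-1,y]$. Your case analysis around a possible zero of $p$ in the interval is more elaborate than necessary---the paper simply observes that $F'/F^{2}=\dfrac{p'(x+1)p(x)-p(x+1)p'(x)}{(p(x+1)-p(x))^{2}}$ after cancelling $(p(x))^{2}$, so the only obstruction to continuity on $[y-1,y]$ is a $\beta_k$, not a zero of $p$---and note a small slip in your final paragraph: it is $F$, not $1/F$, that blows up at a zero of $p$ (your earlier analysis has this right).
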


\begin{proof}
If no  $\beta_k$ is in $[y-1,y]$, then $\frac{F'(x)}{(F(x))^2} = \frac{(p'(x+1)p(x)-p(x+1)p'(x))(p(x))^2}{(p(x+1)-p(x))^2(p(x))^2}$ can be extended to be continuous and bounded on $[y-1,y]$.  By Lemma \ref{cslem} $(F(x))^2\le-nF'(x)$.  Dividing both sides of this inequality by $n(F(x))^2$ and integrating from $y-1$ to $y$ we have
$$\frac{1}{n}\le\frac{1}{F(y)}-\frac{1}{F(y-1)}=\frac{p(y)}{p(y+1)-p(y)} - \frac{p(y-1)}{p(y)-p(y-1)}.$$
Using $\min\{p(y+1),p(y)\}<p(y)<\max\{p(y+1),p(y-1)\}$, we have that either $p(y-1)<p(y)<p(y+1)$ or $p(y+1)<p(y)<p(y-1)$.  In both cases, $(p(y+1)-p(y))(p(y)-p(y-1))>0$  and therefore 
\bea
\frac{1}{n}(p(y+1)-p(y))(p(y)-p(y-1))&\le& p(y)(p(y)-p(y-1))- p(y-1)(p(y+1)-p(y))\\
 &\le& (p(y))^2 -p(y+1)p(y-1).
\eea
Dividing both sides by $(p(y))^2$ gives the result.  
\end{proof}

\begin{lem}
For $p(x)\in\lp_n$, the associated functions $F(x)$ and $R(x)$ from Definition \ref{FRdf} satisfy
$$F(x)R(x) = (F(x)-R(x)) + \frac{(p(x))^2 - p(x+1)p(x-1)}{(p(x))^2}$$
for all $x\in\rl$, where $p(x)\neq 0$.
\label{mltid}
\end{lem}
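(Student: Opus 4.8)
The plan is to verify this identity by a direct algebraic computation, since both sides are rational functions in the three quantities $p(x-1)$, $p(x)$, and $p(x+1)$. To streamline the bookkeeping I would abbreviate $a := p(x-1)$, $b := p(x)$, and $c := p(x+1)$, with $b \neq 0$. By Definition \ref{FRdf},
\[
F(x) = \frac{c-b}{b}, \qquad R(x) = \frac{b-a}{b},
\]
so the left-hand side is $F(x)R(x) = \dfrac{(c-b)(b-a)}{b^2}$, while on the right-hand side $F(x) - R(x) = \dfrac{a + c - 2b}{b}$ and the last summand is $\dfrac{b^2 - ac}{b^2}$.

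The remaining step is to bring the right-hand side over the common denominator $b^2$ and check that the numerators agree: expanding gives
\[
b(a + c - 2b) + (b^2 - ac) = ab + bc - b^2 - ac = (c-b)(b-a),
\]
which is exactly the numerator of $F(x)R(x)$. Hence the identity holds wherever $p(x) \neq 0$, which is the stated domain.

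I do not anticipate any genuine obstacle here: the statement is an identity, not an inequality, and holds for every $p \in \lp_n$ with no hypothesis on the mesh size or on simplicity of zeros --- indeed it is valid for any function $p$ for which $F$ and $R$ are defined. The only point requiring a word of care is the restriction to $x$ with $p(x) \neq 0$, which is needed simply so that $F(x)$, $R(x)$, and the final fraction are meaningful; at such points all manipulations above are manipulations of honest real numbers. This lemma will presumably be combined with Lemma \ref{FRineq} (which bounds $\frac{1}{n}F(y)R(y)$ by the same fraction $\frac{(p(y))^2 - p(y+1)p(y-1)}{(p(y))^2}$) in order to control $f_n(y,1,p)$ in the outstanding case $\min\{p(y+1),p(y-1)\} \le p(y) \le \max\{p(y+1),p(y-1)\}$.
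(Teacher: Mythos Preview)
Your proof is correct and is precisely the ``direct calculation using the definitions of $F(x)$ and $R(x)$ in terms of $p(x)$'' that the paper invokes without writing out. Your remarks about the role of the hypothesis $p(x)\neq 0$ and the subsequent use of the lemma together with Lemma~\ref{FRineq} are also accurate.
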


\begin{proof}
This lemma is verified by direct calculation using the definitions of $F(x)$ and $R(x)$ in terms of $p(x)$.
\end{proof}

\begin{lem}
Let $p(x)\in\lp_n$, $n\ge2$, with $\mu(p)\ge 1$.  
\bnum
\item If $p(\beta)=p(\beta+1)>0$, then for all $x\in(\beta,\beta+1)$, $p(x)>p(\beta)$ and $p(x)>\max\{p(x+1),p(x-1)\}$.\label{i}  
\item If $p(\beta)=p(\beta+1)<0$, then for all $x\in(\beta,\beta+1)$,  $p(x)<p(\beta)$ and $p(x)<\min\{p(x+1),p(x-1)\}$.
\label{ii}
\item  If $p(\beta)=p(\beta+1)=0$, then for all $x\in(\beta,\beta+1)$, either $p(x)>\max\{p(x+1),p(x-1)\}$ or $p(x)<\min\{p(x+1),p(x-1)\}$.
\label{iii}
\enum
\label{dsctzr}
\end{lem}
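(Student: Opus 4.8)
The plan is to prove (i) directly, then obtain (ii) by applying (i) to $-p$, and to handle (iii) by a separate, easier sign-chasing argument. The two tools I would rely on throughout are the monotonicity of the discrete logarithmic derivatives (Corollary \ref{FRdec}: $F'(x)<0$ and $R'(x)<0$ wherever these are defined) and the elementary fact that a local extremum of a polynomial in $\lp_n$ with simple zeros has the expected sign (Lemma \ref{nnm}); recall that $\mu(p)\ge 1>0$ forces the zeros of $p$ to be simple.

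For (i), suppose $p(\beta)=p(\beta+1)>0$. First I would show $p>0$ on all of $[\beta,\beta+1]$: the product $p(\beta)p(\beta+1)=(p(\beta))^2$ is positive, so $p$ has an even number of zeros in $(\beta,\beta+1)$, and this number cannot be $\ge 2$ because $\mu(p)\ge 1$; hence it is $0$, and since $p(\beta)>0$ and $p$ is continuous, $p>0$ on $[\beta,\beta+1]$. Next I would prove $p(x)>p(\beta)$ for $x\in(\beta,\beta+1)$: the restriction of $p$ to $[\beta,\beta+1]$ attains its minimum, and if the minimum were attained at an interior point $x^*$ then $x^*$ would be a local minimum of $p$ with $p(x^*)>0$, contradicting Lemma \ref{nnm}; thus the minimum equals $p(\beta)=p(\beta+1)$ and is attained only at the endpoints (the same argument rules out an interior point at which $p$ equals this value), which gives $p(x)>p(\beta)$ on the open interval.

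The crux is the inequality $p(x)>\max\{p(x+1),p(x-1)\}$ on $(\beta,\beta+1)$. Here I would exploit that the hypothesis $p(\beta)=p(\beta+1)$ says precisely that $F(\beta)=\frac{p(\beta+1)-p(\beta)}{p(\beta)}=0$ and $R(\beta+1)=\frac{p(\beta+1)-p(\beta)}{p(\beta+1)}=0$. Since $p$ has no zeros on $[\beta,\beta+1]$, both $F$ and $R$ are defined there, and that interval lies in a single component of their domains, so by Corollary \ref{FRdec} both are strictly decreasing on $[\beta,\beta+1]$. Hence $F(x)<F(\beta)=0$ for $x\in(\beta,\beta+1]$ and $R(x)>R(\beta+1)=0$ for $x\in[\beta,\beta+1)$. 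Multiplying by $p(x)>0$ yields $p(x+1)-p(x)=F(x)p(x)<0$ and $p(x)-p(x-1)=R(x)p(x)>0$ for all $x\in(\beta,\beta+1)$, i.e.\ $p(x)>p(x+1)$ and $p(x)>p(x-1)$. This proves (i), and (ii) follows immediately by applying (i) to $-p\in\lp_n$, which has mesh $\mu(-p)=\mu(p)\ge 1$.

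For (iii), if $p(\beta)=p(\beta+1)=0$ then $\beta$ and $\beta+1$ are zeros of $p$, and they are consecutive zeros because $\mu(p)\ge 1$, so $p$ has constant sign on $(\beta,\beta+1)$; say $p>0$ there (otherwise replace $p$ by $-p$). Since $\beta$ and $\beta+1$ are simple zeros, $p<0$ immediately to the left of $\beta$ and immediately to the right of $\beta+1$, while the mesh condition forces $(\beta-1,\beta)$ and $(\beta+1,\beta+2)$ to contain no zeros of $p$; hence $p<0$ on both of these intervals. For $x\in(\beta,\beta+1)$ one has $x-1\in(\beta-1,\beta)$ and $x+1\in(\beta+1,\beta+2)$, so $p(x)>0>\max\{p(x+1),p(x-1)\}$; in the case $p<0$ on $(\beta,\beta+1)$ the same reasoning gives $p(x)<\min\{p(x+1),p(x-1)\}$. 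I expect the main obstacle to be the crux step of part (i): recognizing that $p(\beta)=p(\beta+1)$ forces $F$ to vanish at $\beta$ and $R$ to vanish at $\beta+1$, and that $\mu(p)\ge 1$ keeps $[\beta,\beta+1]$ inside one interval of monotonicity for each of $F$ and $R$; once this is seen, the rest is sign bookkeeping together with Lemma \ref{nnm}.
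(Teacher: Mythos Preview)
Your proof is correct, but for the crux of part (i)---showing $p(x)>\max\{p(x+1),p(x-1)\}$---you take a genuinely different route from the paper. The paper argues geometrically: it locates the neighboring zeros $\alpha_1<\beta<\beta+1<\alpha_2$ of $p$, uses interlacing to see that $p'$ has a \emph{unique} zero $a\in(\alpha_1,\alpha_2)$, and then deduces that $p$ is increasing on $(\alpha_1,a)$ and decreasing on $(a,\alpha_2)$, so that $p(\beta+1)>p(x+1)$ for $x\in(\beta,\alpha_2)$ (using also $p\le 0$ on $(\alpha_2,\alpha_2+1)$), and symmetrically for $p(x-1)$. You instead observe that $F(\beta)=0$ and $R(\beta+1)=0$, invoke Corollary~\ref{FRdec} to get strict monotonicity of $F$ and $R$ on the zero-free interval $[\beta,\beta+1]$, and read off $F(x)<0$, $R(x)>0$ there; multiplying by $p(x)>0$ finishes. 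Your argument is shorter and ties the lemma neatly back to the discrete logarithmic derivatives that drive the rest of the paper; the paper's argument is more elementary in that it avoids Corollary~\ref{FRdec} (and hence Lemma~\ref{sgnABlem}) and relies only on Rolle/interlacing for $p$ and $p'$. For the first assertion of (i), your use of Lemma~\ref{nnm} (an interior minimum on $[\beta,\beta+1]$ would be a positive local minimum) is a cleaner packaging of what the paper does via a critical-point count. For (iii) you give a self-contained sign chase on the adjacent intervals $(\beta-1,\beta)$ and $(\beta+1,\beta+2)$, whereas the paper simply notes $p(x)>0=p(\beta)$ and refers back to the argument of (i); both are valid.
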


\begin{proof}
Note that by Lemma \ref{FRrlz}, any $\beta$ which satisfies $p(\beta)=p(\beta+1)$ under the hypotheses stated in Lemma \ref{dsctzr} must be real and simple since $\beta$ is a zero of $F(x)$.

 For case \eqref{i}, assume for a contradiction that $\exists x_0\in(\beta,\beta+1)$ such that $p(x_0)\le p(\beta)$.
There can not be any zeros of $p$ on $(\beta,\beta+1)$, if there were, $p(\beta)p(\beta+1)>0$ implies that $p(x)$ must have at least two zeros on $(\beta,\beta+1)$, which contradicts $\mu(p)\ge1$.  Thus, for all $ x\in(\beta,\beta+1)$, $p(x)>0$.  Specifically $p(x_0)>0$.

Since $p(x)$ does not change sign on $(\beta,\beta+1)$, the interval $(\beta,\beta+1)$ must lie between two neighboring zeros of $p(x)$, call them $\alpha_1$ and $\alpha_2$, such that $(\beta,\beta+1)\subset(\alpha_1,\alpha_2)$.  By the mean value theorem $\exists a\in(\beta,\beta+1)$ with $p'(a)=0$.  The zeros of $p(x)$ and $p'(x)$ interlace, and in order to preserve the interlacing $a$ must be the only zero of $p'(x)$ in $(\alpha_1,\alpha_2)$, hence $p'(\beta),p'(\beta + 1)\neq 0$.  Because the zeros are simple, for some $\varepsilon >0$, for all $ x\in(\alpha_1,\alpha_1+\varepsilon )$, $p'(x)p(x)>0$, and for all $ x\in(\alpha_2-\varepsilon ,\alpha_2)$, $p'(x)p(x)<0$.  Since $p'$ and $p$ do not change sign on $(\alpha_1,\beta)$ or $(\beta +1, \alpha_2)$, this gives us that $p'(\beta)>0$ and $p'(\beta+1)<0$. Then if $p(x_0)\le p(\beta)$, $p'$ must change signs at least twice  on $(\alpha_1,\alpha_2)$ (actually three times), at least once on $(\beta,x_0)$ and at least once on $(x_0,\beta+1)$, and this contradicts the uniqueness of $a$.  Thus for all $ x\in(\beta,\beta+1)$ we have $p(x)>p(\beta)$.

To show $p(x)>p(\beta)$ implies $p(x)>\max\{p(x+1),p(x-1)\}$ for all $ x\in(\beta,\beta+1)$, notice that since $p'(y)<0$ for all $y\in(\beta+1,\alpha_2)$, $p(\beta+1)>p(y)$ for all $y\in(\beta+1,\alpha_2)$, and due to the zero spacing $p\le 0$ on $(\alpha_2, \alpha_2+1)$, hence $p(\beta+1)>p(x+1)$ for all $ x\in(\beta,\alpha_2)$.  Thus, for all $x\in(\beta,\beta+1)$, $p(x)>p(\beta +1)>p(x+1)$.  In the same way, $p'(y)>0$ for $y\in(\alpha_1, \beta)$ and $p\le0$ on $(\alpha_1-1,\beta)$ imply that $p(\beta)>p(x)$ for all $ x\in(\alpha_1-1,\beta)$ and therefore $p(x)>p(x-1)$ for all $x\in(\beta, \beta+1)$.  Hence, for all $ x\in(\beta,\beta+1)$, $p(x)>p(x-1)$ and $p(x)>p(x+1)$, therefore $p(x)>\max\{p(x+1),p(x-1)\}$. 

Consider case \eqref{iii}.  If $p(\beta) = p(\beta+1) = 0$, then $p$ does not change sign on $(\beta,\beta+1)$ since $\mu(p)\ge1$.  It suffices to consider the case when $p$ is positive on $(\beta,\beta+1)$.  Then for all $x\in (\beta,\beta+1)$, $p(x)>0=p(\beta)$.  The conclusion  $p(x)>\max\{p(x+1),p(x-1)\}$ ($p(x)<\min\{p(x+1),p(x-1)\}$) is a consequence of $p(x)>p(\beta)$ ($p(x)<p(\beta)$) by the same argument given in the proof of case (\ref{i}).

To prove \eqref{ii}, let $g(x)=-p(x)$ and apply \eqref{i}.
 
\end{proof}

\begin{lem}
If $p(x)\in\lp_n$, $n\ge2$, $\mu(p)\ge 1$, and $g(x)=p(x+1)-p(x)$, then $\mu(g)\ge 1$.
\label{spclem}
\end{lem}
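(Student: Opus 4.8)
The plan is to reduce to a contradiction via Lemma \ref{dsctzr}, after first checking that $\mu(g)$ is even well defined, i.e.\ that $g\in\lp_{n-1}$ and its zeros are simple. Writing $p(x)=a_nx^n+\cdots$, the computation already carried out in the proof of Lemma \ref{smABlm} gives $g(x)=p(x+1)-p(x)=na_nx^{n-1}+O(x^{n-2})$, so $\deg g=n-1\ge 1$. Since $g(x)=F(x)\,p(x)$, every zero $\beta$ of $g$ is either a zero of $p$ (hence real, as $p\in\lp_n$) or a zero of the rational function $F$ (hence real, by Lemma \ref{FRrlz}); so all $n-1$ zeros of $g$ are real and $g\in\lp_{n-1}$. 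I would also record that these zeros are simple: a zero of $g$ at which $p\neq 0$ is a simple zero of $F$ by Lemma \ref{FRrlz}; and if $g(\alpha_k)=0$ at a zero $\alpha_k$ of $p$, then $p(\alpha_k+1)=0$, so $\mu(p)\ge 1$ forces $\alpha_k+1=\alpha_{k+1}$, and then $g'(\alpha_k)=p'(\alpha_{k+1})-p'(\alpha_k)\neq 0$ since $p'(\alpha_k)$ and $p'(\alpha_{k+1})$ are nonzero with opposite signs ($\alpha_k,\alpha_{k+1}$ being consecutive simple zeros of $p$). Thus $g$ has $n-1$ distinct real zeros and $\mu(g)$ is well defined (vacuously $\mu(g)=\infty$ when $n=2$).

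Now suppose, for contradiction, that $\mu(g)<1$. Then $g$ has two consecutive zeros $\beta<\beta'$ with $\beta'-\beta<1$, so $\beta'\in(\beta,\beta+1)$. Since $g(\beta)=0$ we have $p(\beta+1)=p(\beta)$, so Lemma \ref{dsctzr} applies at this $\beta$. In case (i) ($p(\beta)=p(\beta+1)>0$) it yields $p(x)>\max\{p(x+1),p(x-1)\}$ for every $x\in(\beta,\beta+1)$, hence $g(x)=p(x+1)-p(x)<0$ on that interval; in case (ii) it yields $g(x)>0$ on $(\beta,\beta+1)$; and in case (iii) ($p(\beta)=p(\beta+1)=0$) it yields that for each $x\in(\beta,\beta+1)$ either $p(x)>\max\{p(x+1),p(x-1)\}$ or $p(x)<\min\{p(x+1),p(x-1)\}$, so in particular $g(x)\neq 0$. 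In every case $g$ has no zero in the open interval $(\beta,\beta+1)$, contradicting $\beta'\in(\beta,\beta+1)$. Therefore $\mu(g)\ge 1$.

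The substantive content is entirely carried by Lemma \ref{dsctzr}, so once that lemma is in hand there is no serious obstacle left. The only points that need care are the preliminary bookkeeping — establishing that $g$ has exactly $n-1$ \emph{simple} real zeros before one is entitled to speak of ``consecutive zeros $\beta<\beta'$'' (a repeated zero would force $\mu(g)=0$) — and the observation that Lemma \ref{dsctzr} concerns the \emph{open} interval $(\beta,\beta+1)$, so that a zero gap of exactly $1$ causes no difficulty.
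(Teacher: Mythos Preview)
Your proof is correct and follows essentially the same route as the paper's: assume $\mu(g)<1$, take two zeros $\beta<\beta'$ of $g$ with $\beta'\in(\beta,\beta+1)$, and invoke Lemma \ref{dsctzr} to conclude $g(\beta')\neq 0$, a contradiction. Your version is in fact a bit more careful than the paper's in two respects: you explicitly verify that $g$ has only simple real zeros (handling separately the possibility that a zero of $g$ coincides with a zero of $p$, which Lemma \ref{FRrlz} alone does not cover), and you argue purely from the \emph{statement} of Lemma \ref{dsctzr} by treating all three cases, whereas the paper appeals to a fact (``$p$ does not change sign on $(\beta_1,\beta_1+1)$'') buried in that lemma's proof.
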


\begin{proof}({\it Reductio ad Absurdum})
If $\mu(g)<1$, then there exist $\beta_1,\beta_2\in\rl$ such that $0<\beta_2-\beta_1<1$ and $g(\beta_1)=g(\beta_2)=0$.  In the proof of Lemma \ref{dsctzr} we have shown that $p(x)$ does not change sign on $(\beta_1, \beta_1 +1)$.  Without loss of generality assume that $p$ is positive on $(\beta_1,\beta_1 + 1)$. Observe that $\beta_2\in(\beta_1,\beta_1+1)$, and thus by Lemma \ref{dsctzr}, $p(\beta_2)>\max\{p(\beta_2+1),p(\beta_2-1)\}\ge p(\beta_2+1)$.  But this yields $p(\beta_2+1)-p(\beta_2)<0$, and therefore $g(\beta_2)<0$ contradicting $g(\beta_2)=0$.
\end{proof}

Note that Lemma \ref{spclem} is equivalent to the statement that if $p(x)\in\lp_n$ with $\mu(p)\ge 1$, then the associated functions $F(x)$ and $R(x)$ also have zeros spaced at least $1$ unit apart.  Preliminaries aside, we prove Conjecture \ref{kcnj} of I. Krasikov.

\begin{thm}
If $p(x)\in\lp_n$ and $\mu(p)\ge 1$, then 
\beq
f_n(x,1,p) = (n-1)[p(x+1)-p(x-1)]^2 - 4np(x)[p(x+1)-2p(x)+p(x-1)] \ge 0
\label{k2}
\eeq
holds for all $x\in\rl$.
\label{mainthm}
\end{thm}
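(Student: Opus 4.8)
The plan is to reduce the theorem to the single nontrivial regime and then feed the accumulated lemmas into a short computation.  By the discussion following \eqref{sgnprf}, if $p(x_0)<\min\{p(x_0+1),p(x_0-1)\}$ or $p(x_0)>\max\{p(x_0+1),p(x_0-1)\}$ then the second term of \eqref{k2} is nonnegative and $f_n(x_0,1,p)\ge 0$ trivially.  Also, since it suffices to prove the inequality for $n=\deg(p)$ (larger $n$ only makes the first term larger), and since the conclusion is a closed condition, we may assume the zeros of $p$ are simple: a general $p\in\lp_n$ with $\mu(p)\ge 1$ is a limit of such polynomials with simple zeros and mesh $\ge 1$, and $f_n(x,1,\cdot)$ depends continuously on the coefficients.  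So fix $x_0\in\rl$ with $\min\{p(x_0+1),p(x_0-1)\}\le p(x_0)\le\max\{p(x_0+1),p(x_0-1)\}$; if equality holds on either side we are again in the trivial case, so assume the strict inequalities $\min\{p(x_0+1),p(x_0-1)\}<p(x_0)<\max\{p(x_0+1),p(x_0-1)\}$.

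Next I would translate \eqref{k2} into the language of $F$ and $R$.  Dividing \eqref{k2} by $(p(x_0))^2$ (which is nonzero in this regime) and using
$p(x_0+1)-p(x_0-1)=(p(x_0)F(x_0))-(-p(x_0)R(x_0))=p(x_0)(F(x_0)+R(x_0))$ and
$p(x_0+1)-2p(x_0)+p(x_0-1)=p(x_0)(F(x_0)-R(x_0))$, the inequality \eqref{k2} becomes
\beq
(n-1)(F(x_0)+R(x_0))^2 - 4n(F(x_0)-R(x_0)) \ge 0.
\label{FRform}
\eeq
Now Lemma \ref{mltid} rewrites $F(x_0)-R(x_0)=F(x_0)R(x_0)-\dfrac{(p(x_0))^2-p(x_0+1)p(x_0-1)}{(p(x_0))^2}$.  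Substituting this in shows \eqref{FRform} is equivalent to
$(n-1)(F(x_0)+R(x_0))^2 - 4n F(x_0)R(x_0) + 4n\dfrac{(p(x_0))^2-p(x_0+1)p(x_0-1)}{(p(x_0))^2}\ge 0.$
The algebraic identity $(n-1)(F+R)^2-4nFR=(n-1)(F-R)^2-2(F+R)^2+\dots$ is not quite what is wanted; instead I would complete the square as $(n-1)(F+R)^2-4nFR=(n+1)(F-R)^2-2(F-R)^2+\cdots$, or more directly write $(n-1)(F+R)^2-4nFR=(n-1)(F-R)^2-4FR+ \text{(nonneg)}$ — the precise bookkeeping is routine — so that it remains to control $FR$ from below by $n$ times the correction term, which is exactly the content of Lemma \ref{FRineq}: $\tfrac1n F(x_0)R(x_0)\le \dfrac{(p(x_0))^2-p(x_0+1)p(x_0-1)}{(p(x_0))^2}$.

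There is one gap: Lemma \ref{FRineq} requires that the interval $[x_0-1,x_0]$ contain no zero $\beta_k$ of $p(x+1)-p(x)$.  Here is where Lemma \ref{dsctzr} and Lemma \ref{spclem} do the work.  If some $\beta_k\in[x_0-1,x_0]$, say $\beta_k\in(\alpha,\alpha+1)$ issues from the consecutive-zero structure — more carefully, $x_0$ lies in some interval $(\beta,\beta+1)$ between consecutive zeros of $F$ (or $R$), and by Lemma \ref{spclem} these zeros are themselves spaced $\ge 1$ apart, so at most one $\beta_k$ can lie in the closed unit interval $[x_0-1,x_0]$; when one does, Lemma \ref{dsctzr} forces $p(x_0)>\max\{p(x_0+1),p(x_0-1)\}$ or $p(x_0)<\min\{p(x_0+1),p(x_0-1)\}$, i.e. we are back in the already-handled trivial regime, contradiction.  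Hence in the remaining regime the hypothesis of Lemma \ref{FRineq} is automatically met, and the chain of substitutions closes.  I expect the main obstacle to be precisely this case analysis around the zeros of $F$ and $R$ — making sure that the ``interesting'' regime $\min<p(x_0)<\max$ is genuinely disjoint from the configurations excluded in Lemma \ref{FRineq} — together with getting the algebraic completion-of-square bookkeeping in \eqref{FRform} exactly right so that the $FR$ term is bounded in the correct direction.
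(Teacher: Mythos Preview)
Your proposal is correct and matches the paper's proof essentially line for line: reduce to the strictly ``monotone'' regime via Lemma \ref{sgnlm}, pass to the $(F,R)$ variables, apply Lemma \ref{mltid} and Lemma \ref{FRineq}, and use Lemma \ref{dsctzr} (with Lemma \ref{spclem}) to show that a $\beta_k\in(x_0-1,x_0)$ would force the already-handled extremal case. Two small cleanups: the simple-zeros reduction is unnecessary since $\mu(p)\ge 1$ already forces simplicity, and the algebra you hedge on is in fact exact---$(n-1)(F+R)^2-4nFR=(n-1)(F-R)^2-4FR$ with no leftover ``(nonneg)'' term---so after Lemma \ref{mltid} the expression becomes $(n-1)(F-R)^2-4n\bigl(\tfrac{1}{n}FR-\tfrac{p^2-p(x+1)p(x-1)}{p^2}\bigr)$, and Lemma \ref{FRineq} (which bounds $FR$ from \emph{above}, not below) finishes immediately.
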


\begin{proof}
Since \eqref{k2} is true when ${\rm deg}(p(x))$ is $1$ or $2$, we assume $n\ge 2$.
Fix $x=x_0\in\rl$.  If $p(x_0-1)=p(x_0)=p(x_0+1)$, or if $p(x_0)=0$, then $f_n(x,1,p)\ge 0$.  Thus, we may assume $p(x_0)\neq 0$. If $p(x_0)<\min\{p(x_0+1),p(x_0-1)\}$, or if $p(x_0)>\max\{p(x_0+1),p(x_0-1)\}$, then $f_n(x_0,1,p)\ge 0$ (use \eqref{sgnprf} and Lemma \ref{sgnlm}).    

We next consider the case when
 \beq
\min\{p(x_0-1),p(x_0+1)\}< p(x_0) <\max\{p(x_0-1),p(x_0+1)\} \label{pnm}
\eeq 
(thus $x_0\neq\beta \text{ or } \beta + 1$, where $p(\beta+1)=p(\beta)$), and show
$$\frac{f_n(x_0,1,p)}{(p(x_0))^2} = (n-1)(F(x_0)+R(x_0))^2 - 4n(F(x_0)-R(x_0)) \ge 0,$$
where $F(x)$ and $R(x)$ are defined by \eqref{F} and \eqref{R} respectively.
By Lemma \ref{mltid},
\begin{eqnarray}
\frac{f_n(x_0,1,p)}{(p(x_0))^2} &=& (n-1)(F(x_0)-R(x_0))^2\nonumber\\
                                & & \quad- 4n\left(\frac{1}{n}F(x_0)R(x_0)-\frac{(p(x_0))^2 - p(x_0+1)p(x_0-1)}{(p(x_0))^2}\right).
\label{factored}
\end{eqnarray}

By Lemma \ref{spclem}, $\mu(p(x+1)-p(x))\ge1$, and thus the zeros $\{\beta_k\}_{k=1}^{n-1}$ of $F(x)$ ($p(\beta_k+1)=p(\beta_k)$) are spaced at least one unit apart.  If $[x_0-1,x_0]$ does not contain any $\beta_k$, $\frac{f_n(x_0,1,p)}{(p(x_0))^2}\ge 0$ holds by Lemma \ref{FRineq} (see \eqref{factored}) .
If, on the other hand, $\beta_j\in(x_0-1,x_0)$ (recall $\beta_j\neq x_0, x_0-1$), then $x_0\in (\beta_j, \beta_j+1)$ and by Lemma \ref{dsctzr} either $p(x_0)>\max\{p(x_0-1),p(x_0+1)\}$ or $p(x_0)<\min\{p(x_0-1),p(x_0+1)\}$, and both of these cases contradict our assumption (see \eqref{pnm}).  We have now shown $f_n(x_0,1,p))\ge0$ for all $x_0\in\rl$, except for the isolated points where $x_0=\beta_j$ or $x_0=\beta_j+1$ for some $j$, but by continuity of $f_n(x,1,p)$, \eqref{k2} will hold.  

\end{proof}

The converse of Theorem \ref{mainthm} is false in general.  Indeed, the following example shows that there are polynomials with arbitrary minimal zero spacing that still satisfy $f_n(x,1,p)\ge 0$ for all $x\in\rl$.

\begin{exmp} Let $p(x)= (x+n+a)\prod_{k = 1}^{n-1} (x+k)$  with $n\ge 2$, $a\in\rl$. 
Using a symbolic manipulator (we used Maple)
$$f_n(x,1,p) = C(x,n,a)\prod_{k=2}^{n-2} (x+k)^2$$
where
\begin{multline} 
C(x,n,a) := (n-1)(-2n^3-4na+4a^2+n^2+n^4)x^2 \\
                +(n-1)(6n^2a+4n^4-8n^3a+8a^2-12na+4na^2-8n^3+2n^4a+4n^2)x\\
                +(n-1)(-8na-4na^2+4a^2+4n^4a-8n^3+4n^4+4n^2+12n^2a\\
+n^4a^2+13n^2a^2-16n^3a-6n^3a^2).
\end{multline}
$C(x,n,a)$ is quadratic in $x$ and its discriminant is 
$D=-16na^2(n-1)^2(n-2)^3(a-n)^2\le0$.
Therefore $C(x,n,a)$ does not change sign and is always positive (this is verified by showing that the coefficient of $x^2$ is positive when considered as a quadratic in $a$),
whence $f_n(x,1,p) \ge 0$ for all $x\in\rl$. 
\end{exmp}

In general, a polynomial $p$ may satisfy $f_n(p,1,x)\ge0$ for all $x\in\rl$, even if $p$ has multiple zeros.
If $p(x)=x^2(x+1)$, which has $\mu(p)=0$, then $f_3(x,1,p)=56x^2+32x+8$ is non-negative for all $x\in\rl$. 
A polynomial $p$ with non-real zeros may also satisfy $f_n(p,1,x)\ge0$ for all $x\in\rl$.
For example, let $p(x)=(x^2+1)(x+1)$, then $f_3(x,1,p)=32x^2-32x+8\ge 0$ for all $x\in\rl$.   

It is known that a polynomial $p(x)\in\lp_n$ with only real zeros satisfies $\mu(p)\le\mu(p')$; that is, $p'(x)$ will have a minimal zero spacing which is larger than that of $p(x)$ (N. Obreschkoff \cite[p. 13, Satz 5.3]{O}, P. Walker \cite{W}).  In light of Lemma \ref{spclem}, the aforementioned result suggests the following conjecture.
\begin{conj}
If $p(x)\in\lp_n$, $n\ge2$, $\mu(p)\ge d \ge 1$, and $g(x)=p(x+1)-p(x)$, then $\mu(g)\ge d$. 
\label{zspcconj}
\end{conj}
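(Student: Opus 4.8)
\textbf{Proof proposal for Conjecture \ref{zspcconj}.}

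The plan is to mimic the structure of the proof of Lemma \ref{spclem}, but to carry along the quantitative parameter $d$. As with Lemma \ref{spclem}, we argue by contradiction: suppose $\mu(g) < d$, so there exist $\beta_1 < \beta_2$ with $\beta_2 - \beta_1 < d$ and $g(\beta_1) = g(\beta_2) = 0$, i.e. $p(\beta_i + 1) = p(\beta_i)$ for $i = 1,2$. Since $\mu(p) \ge d \ge 1$, Lemma \ref{spclem} already tells us $\mu(g) \ge 1$, so in fact $1 \le \beta_2 - \beta_1 < d$. The first key step is to re-examine case \eqref{i} of Lemma \ref{dsctzr} and extract a \emph{strict} version: if $p(\beta) = p(\beta+1) > 0$, then not only is $p(x) > p(\beta)$ on $(\beta,\beta+1)$, but the unique critical point $a \in (\beta,\beta+1)$ of $p$ in the relevant interval $(\alpha_1,\alpha_2)$ between consecutive zeros of $p$ satisfies $a - \beta \ge \tfrac12\mu(p) \ge \tfrac{d}{2}$ and $(\beta+1) - a \ge \tfrac{d}{2}$ — this is because $a$ sits strictly between $\alpha_1$ and $\alpha_2$, and the zeros of $p'$ are themselves at least $\mu(p') \ge \mu(p) \ge d$ apart by the Obreschkoff--Walker result, so one should be able to pin down how far $a$ can be from the endpoints of $(\beta,\beta+1)$.

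The second step is to use this to control where zeros of $g$ can lie inside $(\beta_1,\beta_1+1)$. Assume WLOG (as in Lemma \ref{spclem}) that $p > 0$ on $(\beta_1,\beta_1+1)$, so $p(\beta_1) = p(\beta_1+1) > 0$ and the critical point $a_1 \in (\beta_1,\beta_1+1)$ is a local maximum with $p(a_1) > p(\beta_1)$. The function $p(x) - p(x-1)$, resp. $p(x+1) - p(x)$, governs monotonicity comparisons. The idea is that on the left part of $(\beta_1, \beta_1+1)$, say on $(\beta_1, a_1)$, we have $p$ increasing, and combined with the zero-spacing of $p$ one forces $p(x+1) - p(x) < 0$ there (as in Lemma \ref{spclem}), so $g$ has no zero in $(\beta_1, a_1)$; symmetrically using $R$ one forces $g$ to have no zero in $(\beta_1, \beta_1 + 1)$ too close to $\beta_1$, and the only room left for $\beta_2$ is an interval of length strictly less than... and here is where the bound $a_1 - \beta_1 \ge d/2$, $(\beta_1+1) - a_1 \ge d/2$ must be leveraged to show $\beta_2$ cannot lie within distance $d$ of $\beta_1$ while still being a zero of $g$ in $(\beta_1,\beta_1+1)$. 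Actually the cleanest route may be: show directly that the next zero of $g$ after $\beta_1$ is at least $\mu(p) \ge d$ away, by relating it to the gap between $a_1$ and the next local extremum of $p$, which is $\ge \mu(p') \ge d$.

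The main obstacle I anticipate is making the geometric picture in the previous paragraph fully rigorous — in particular, translating "$a_1$ is far from the endpoints of $(\beta_1,\beta_1+1)$" into a genuine lower bound on the distance from $\beta_1$ to the next zero of $g$. The subtlety is that $g$'s zeros are not the critical points of $p$ but the solutions of $p(x+1)=p(x)$, which are secant-slope-zero points; one needs a clean correspondence, perhaps via the intermediate value theorem applied to $x \mapsto p(x+1)-p(x)$ on intervals delimited by consecutive extrema of $p$, together with the fact (Corollary \ref{FRdec}, Lemma \ref{FRrlz}) that $F$ has exactly one zero between consecutive poles. A secondary difficulty is handling the degenerate cases $p(\beta_1)=p(\beta_1+1)=0$ (case \eqref{iii} of Lemma \ref{dsctzr}) and boundary configurations where $\beta_1$ or $\beta_1+1$ coincides with a zero of $p$; these should be dispatched by the same sign/spacing bookkeeping used in Lemma \ref{dsctzr}, possibly after a limiting argument. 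If the direct geometric approach proves too delicate, an alternative is to prove the stronger statement $\mu(p(x+h)-p(x)) \ge \mu(p)$ for all $h$ with $0 < h \le \mu(p)$ by a deformation/continuity argument in $h$, starting from the known $h \to 0^+$ limit $\mu(p') \ge \mu(p)$ and showing the minimal gap of $p(\cdot+h)-p(\cdot)$ cannot drop below $\mu(p)$ as $h$ increases to $\mu(p)$, since a drop would require two such zeros to collide, which Lemma \ref{FRrlz} (real simple zeros) forbids as long as $\mu(p) \ge h$.
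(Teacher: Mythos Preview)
The paper does not prove this statement; it is explicitly labeled as an open conjecture (Conjecture \ref{zspcconj}), motivated by Lemma \ref{spclem} (which is the special case $d=1$) and the continuous analogue $\mu(p')\ge\mu(p)$ due to Obreschkoff and Walker. There is therefore no proof in the paper to compare against.

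As for your proposal itself, both sketched routes have genuine gaps. In the first approach, the claimed inequalities $a-\beta\ge d/2$ and $(\beta+1)-a\ge d/2$ are not supported by the reasoning you give: knowing that consecutive zeros of $p'$ are at least $d$ apart says nothing about where the single critical point $a$ sits inside the length-one interval $(\beta,\beta+1)$. If $p$ has very asymmetrically placed zeros on either side of $(\beta,\beta+1)$, the critical point $a$ can be pushed close to either endpoint. Even if one granted the claim, the subsequent step linking ``$a$ is far from the endpoints of $(\beta,\beta+1)$'' to ``the next zero of $g$ is at least $d$ away from $\beta_1$'' is only gestured at; zeros of $g$ are secant-zero points, not critical points, and you have not exhibited a mechanism converting a $d$-gap between critical points into a $d$-gap between zeros of $g$.

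The alternative deformation argument in $h$ contains a logical slip. You assert that the minimal gap of $p(\cdot+h)-p(\cdot)$ cannot drop below $\mu(p)$ because ``a drop would require two such zeros to collide.'' But a collision means the minimal gap drops to \emph{zero}, not merely below $\mu(p)$; nothing prevents the minimal gap from decreasing continuously from its value $\mu(p')\ge d$ at $h\to0^+$ down to some value strictly between $0$ and $d$, with all $n-1$ zeros remaining real and simple throughout. Lemma \ref{FRrlz} forbids coalescence, not shrinkage of gaps, so the continuity argument as stated does not close. The conjecture remains open.
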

The derivation of the classical Laguerre inequality relies on properties of the logarithmic derivative of a polynomial. In the same way, Conjecture \ref{kcnj} was proved using a discrete version of the logarithmic derivative.  The analogy between the discrete and continuous logarithmic derivatives motivates the following conjectures, based on Theorem \ref{ctmsr} and its converse (B. Muranaka \cite{M}). 

\begin{thm}{\rm(P. B. Borwein and T. Erd\'{e}lyi \cite[p. 345]{BE})}
If $p\in\lp_n$, then
$$ m\left( \left\{x\in\rl : \frac{p'(x)}{p(x)}\ge \lambda\right\} \right) 
= \frac{n}{\lambda} \text{ \hspace{5mm} } \forall\lambda>0,$$
where $m$ denotes Lebesgue measure.
\label{ctmsr}
\end{thm}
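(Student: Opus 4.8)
The plan is to exploit the partial fraction decomposition of the logarithmic derivative together with the fact that, between consecutive zeros of $p$, the function $p'/p$ decreases monotonically from $+\infty$ to $-\infty$. Write the zeros of $p$ as $\alpha_1 < \alpha_2 < \cdots < \alpha_r$ (collapsing multiplicities, which only rescales residues but does not change the set where $p'/p \ge \lambda$), so that
$$\frac{p'(x)}{p(x)} = \sum_{k=1}^r \frac{m_k}{x-\alpha_k},$$
with $m_k \ge 1$ the multiplicity and $\sum_k m_k = n$. On each open interval $(\alpha_k, \alpha_{k+1})$, as well as on $(-\infty,\alpha_1)$ and $(\alpha_r,\infty)$, the function $p'/p$ is continuous and strictly decreasing, hence the superlevel set $\{x : p'(x)/p(x) \ge \lambda\}$ meets each such interval in a single half-open (or, on the last interval, unbounded --- but the level $\lambda>0$ is not attained as $x\to+\infty$, so actually bounded) subinterval. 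The first step is therefore to show that $\{x \in \rl : p'(x)/p(x) \ge \lambda\}$ is a disjoint union of $r$ bounded intervals, one immediately to the right of each $\alpha_k$, of the form $[\alpha_k, \xi_k)$ (with the left endpoint $\alpha_k$ excluded only in the degenerate sense that $p'/p$ is not defined there; it contributes zero measure either way), where $\xi_1 < \xi_2 < \cdots < \xi_r$ are precisely the roots of $p'(x) = \lambda p(x)$, equivalently the zeros of $p'(x) - \lambda p(x)$.

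The second step is to identify $\xi_1,\dots,\xi_r$ and sum the lengths. The total measure sought is
$$\sum_{k=1}^r (\xi_k - \alpha_k) = \sum_{k=1}^r \xi_k - \sum_{k=1}^r \alpha_k.$$
Now $p'(x) - \lambda p(x)$ is a polynomial of degree $n$ with leading coefficient $-\lambda a_n$ (where $a_n$ is the leading coefficient of $p$) and next coefficient $n a_n - \lambda a_{n-1}$; its roots are exactly $\xi_1,\dots,\xi_r$ but counted with multiplicity — here I must be slightly careful, since if some $\alpha_k$ has multiplicity $m_k \ge 2$ then $\alpha_k$ is a root of $p' - \lambda p$ of multiplicity $m_k - 1$. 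So the polynomial $p' - \lambda p$ has the roots $\alpha_k$ with multiplicity $m_k-1$ and the roots $\xi_k$ (simple), for a total of $\sum(m_k-1) + r = n$, consistent with its degree. By Vieta's formula, the sum of all roots of $p' - \lambda p$ equals $-(na_n - \lambda a_{n-1})/(-\lambda a_n) = (n a_n - \lambda a_{n-1})/(\lambda a_n)$, i.e.
$$\sum_{k=1}^r (m_k-1)\alpha_k + \sum_{k=1}^r \xi_k = \frac{n}{\lambda} + \frac{a_{n-1}}{a_n}\cdot(-1)\cdot(-1)\quad\text{— recompute the sign —}\quad = \frac{n}{\lambda} - \frac{a_{n-1}}{a_n} \cdot \frac{1}{1},$$
and likewise, from $p$ itself, $\sum_k m_k \alpha_k = -a_{n-1}/a_n$. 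Subtracting these two identities kills the $-a_{n-1}/a_n$ term and the $\sum(m_k-1)\alpha_k$ cancels against part of $\sum m_k\alpha_k$, leaving exactly $\sum_k \xi_k - \sum_k \alpha_k = n/\lambda$, which is the claim.

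The main obstacle, and the step deserving the most care, is the first one: rigorously establishing that the superlevel set is a union of exactly $r$ bounded intervals with the stated endpoints, and in particular that no spurious components arise near $\pm\infty$ or get lost when multiplicities are present. This follows from (i) $p'/p \to 0^{\pm}$ as $x\to\pm\infty$ so for $\lambda>0$ the set is bounded away from $\pm\infty$, (ii) strict monotonicity of $p'/p$ on each gap between consecutive $\alpha_k$, which is exactly the content of $(p'/p)' = -\sum m_k/(x-\alpha_k)^2 < 0$ — the same computation underlying the classical Laguerre inequality in \eqref{lineq} — and (iii) $p'/p \to +\infty$ as $x \downarrow \alpha_k$ and $p'/p \to -\infty$ as $x \uparrow \alpha_{k+1}$, so that on each interval $(\alpha_k,\alpha_{k+1})$ the equation $p'/p = \lambda$ has exactly one solution $\xi_k \in (\alpha_k,\alpha_{k+1})$, and $\{p'/p \ge \lambda\} \cap (\alpha_k,\alpha_{k+1}) = (\alpha_k,\xi_k]$. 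Once the interval structure is pinned down, the Vieta bookkeeping in the second and third steps is routine; I would present it cleanly by working directly with $q(x) := p'(x) - \lambda p(x)$ and comparing the coefficient of $x^{n-1}$ in $q$ and in $p$. Finally, one notes the answer $n/\lambda$ is manifestly independent of the particular polynomial, as the theorem asserts.
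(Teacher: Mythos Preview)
The paper does not prove Theorem \ref{ctmsr}; it is quoted as a known result from Borwein and Erd\'{e}lyi \cite[p.~345]{BE} and used only to motivate Conjectures \ref{msrconj} and \ref{msrconjcon}. So there is no ``paper's own proof'' to compare against.

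That said, your argument is correct and is in fact the standard proof. The key steps are all in order: (i) the logarithmic derivative is strictly decreasing on each gap between distinct zeros (this is precisely the computation in \eqref{lineq}); (ii) on $(-\infty,\alpha_1)$ it is negative, on each $(\alpha_k,\alpha_{k+1})$ it runs from $+\infty$ to $-\infty$, and on $(\alpha_r,\infty)$ it runs from $+\infty$ to $0^+$, so the superlevel set $\{p'/p\ge\lambda\}$ is exactly $\bigcup_{k=1}^r(\alpha_k,\xi_k]$ with $\xi_k$ the unique solution of $p'(x)=\lambda p(x)$ in the $k$th gap; (iii) the multiplicity bookkeeping for the roots of $q=p'-\lambda p$ is right (each $\alpha_k$ contributes a root of $q$ of order $m_k-1$, and each $\xi_k$ is simple); and (iv) the Vieta computation does give $\sum_k\xi_k-\sum_k\alpha_k=n/\lambda$, since the sum of the roots of $q$ is $\frac{n}{\lambda}-\frac{a_{n-1}}{a_n}$ and $\sum_k m_k\alpha_k=-\frac{a_{n-1}}{a_n}$. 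For a write-up, clean out the ``recompute the sign'' aside and state the sign conventions once; otherwise the argument is complete.
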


\begin{conj}
If $p\in\lp_n$, $n\ge2$, $\mu(p)\ge 1$, then
$$ m\left( \left\{x\in\rl : \frac{p(x+1)-p(x)}{p(x)}\ge \lambda\right\} \right) 
= \frac{n}{\lambda} \text{ \hspace{5mm} } \forall\lambda>0,$$
where $m$ denotes Lebesgue measure.
\label{msrconj}
\end{conj}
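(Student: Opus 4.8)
\textbf{Proof proposal for Conjecture \ref{msrconj}.}

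The plan is to mimic the proof of Theorem \ref{ctmsr} as closely as possible, using the partial fraction expansion $F(x)=\sum_{k=1}^n \frac{A_k}{x-\alpha_k}$ from Definition \ref{FRdf}. First I would note that by Lemma \ref{sgnABlem} all $A_k\ge 0$, and by Lemma \ref{smABlm} we have $\sum_{k=1}^n A_k = n$; moreover, by Lemma \ref{FRrlz} the function $F$ has $n-1$ real simple zeros, and (using Lemma \ref{spclem}, equivalently the comment following it) those zeros are interlaced with the poles $\alpha_1<\cdots<\alpha_n$ in the obvious way, one between each consecutive pair. Fix $\lambda>0$. On each open interval $(\alpha_k,\alpha_{k+1})$, Corollary \ref{FRdec} tells us $F$ is strictly decreasing from $+\infty$ to $-\infty$, so the superlevel set $\{x\in(\alpha_k,\alpha_{k+1}) : F(x)\ge\lambda\}$ is a single half-open interval $(\alpha_k, \xi_k]$ whose length is $\xi_k-\alpha_k$; similarly on $(-\infty,\alpha_1)$ the set $\{F(x)\ge\lambda\}$ is $(-\infty,\xi_0]$ but here $F\to 0$ as $x\to-\infty$ so in fact one must check that $F$ exceeds $\lambda$ only on a bounded piece — and on $(\alpha_n,\infty)$, $F\to 0^+$ from above or below depending on signs, so again only a bounded (possibly empty) piece contributes. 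The upshot is that $\{x : F(x)\ge\lambda\}$ is a finite disjoint union of intervals, and its measure is $\sum (\text{right endpoint} - \alpha_k)$ taken over the poles $\alpha_k$ that are left endpoints.

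The key computational step is then to identify the endpoints as roots of $F(x)=\lambda$, i.e.\ of the polynomial equation $\sum_{k=1}^n A_k \prod_{j\ne k}(x-\alpha_j) = \lambda\prod_{j=1}^n(x-\alpha_j)$, equivalently $\lambda\prod_{j=1}^n(x-\alpha_j) - \sum_{k=1}^n A_k\prod_{j\ne k}(x-\alpha_j) = 0$. This is a degree-$n$ polynomial with leading coefficient $\lambda$; call its roots $r_1\le\cdots\le r_n$. By the interlacing just described, exactly one root lies in each interval $(\alpha_k,\alpha_{k+1})$ for $k=1,\dots,n-1$, and the remaining root lies in $(-\infty,\alpha_1)$ or in $(\alpha_n,\infty)$ according to the sign of $F$ near $\pm\infty$; in either arrangement the total measure of $\{F\ge\lambda\}$ telescopes to $\sum_{k=1}^n r_k - \sum_{k=1}^n \alpha_k$ — this is the same telescoping that makes the continuous theorem work, and the one genuinely careful point is to check the orientation at the two unbounded ends so that the $+$'s and $-$'s line up (the measure picks up $r_k$ with a $+$ and $\alpha_k$ with a $-$ exactly once each). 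Finally, comparing the coefficient of $x^{n-1}$ on both sides of the polynomial equation gives $\sum_{k=1}^n r_k = \sum_{k=1}^n \alpha_k + \frac{1}{\lambda}\sum_{k=1}^n A_k = \sum_{k=1}^n \alpha_k + \frac{n}{\lambda}$ by Lemma \ref{smABlm}, so the measure is exactly $n/\lambda$, as claimed.

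I expect the main obstacle to be the bookkeeping at the two unbounded intervals $(-\infty,\alpha_1)$ and $(\alpha_n,\infty)$: one must verify that $F$ is bounded near $\pm\infty$ (it tends to $0$ since $\deg(p(x+1)-p(x))<\deg p$), determine on which side $F$ approaches $0$, and confirm that the superlevel set there is a bounded interval with the $n$-th root of the polynomial as its correct endpoint, with the correct sign in the telescoping sum. A secondary subtlety is that the statement quietly requires $p$ to have \emph{simple} zeros for $F$ and its partial fraction expansion to be defined as in Definition \ref{FRdf}; since $\mu(p)\ge 1$ with $n\ge 2$ forces simple zeros, this is automatic, but it should be remarked. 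Everything else is a direct transcription of the Borwein--Erd\'elyi argument with $p'/p$ replaced by $F$, $n = \sum 1$ replaced by $n = \sum A_k$, and the classical interlacing of zeros of $p$ and $p'$ replaced by Lemma \ref{FRrlz} together with Corollary \ref{FRdec}.
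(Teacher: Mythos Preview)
The paper records this statement as an open \emph{conjecture} (Conjecture~\ref{msrconj}) and gives no proof of it, so there is nothing in the paper to compare your argument against. That said, your proposal is essentially a complete and correct proof of the conjecture, not merely a sketch.

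To close the two loose ends you yourself flagged: on $(-\infty,\alpha_1)$ every term $A_k/(x-\alpha_k)$ is nonpositive and at least one is strictly negative (since $\sum A_k=n>0$), so $F<0$ there and the superlevel set contributes nothing. On $(\alpha_n,\infty)$ one always has $A_n>0$, because $\alpha_n$ is the largest zero and hence $p(\alpha_n+1)\neq0$; thus $F$ decreases from $+\infty$ to $0^+$ and the $n$th root of $F=\lambda$ lies here. This settles the orientation: the superlevel set is exactly $\bigcup_{k=1}^n(\alpha_k,r_k]$ with $\alpha_k<r_k\le\alpha_{k+1}$ for $k<n$ and $\alpha_n<r_n$, the measure is $\sum r_k-\sum\alpha_k$, and Vieta on the degree-$n$ polynomial $(1+\lambda)p(x)-p(x+1)$ gives $\sum r_k=\sum\alpha_k+n/\lambda$.

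The only genuine subtlety is the boundary case $\mu(p)=1$, where some $A_j$ may vanish (precisely when $\alpha_{j+1}=\alpha_j+1$). Then $\alpha_j$ is a removable singularity of $F$ rather than a pole, and simultaneously a root of $(1+\lambda)p(x)-p(x+1)$ that does \emph{not} satisfy $F=\lambda$. The accounting still balances: each such $\alpha_j$ is absent from the list of left endpoints of the superlevel intervals and also absent from the list of genuine solutions of $F=\lambda$, so it cancels out of the Vieta identity and the measure remains $n/\lambda$. Alternatively, perturb the zeros so that $\mu(p)>1$, apply the generic argument, and pass to the limit (the set $\{F\ge\lambda\}$ varies continuously in the Lebesgue sense). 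Either route completes the proof.
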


\begin{conj}
If $p(x)$ is a real polynomial of degree $n\ge 2$, and if
 $$ m\left( \left\{x\in\rl : \frac{p(x+1)-p(x)}{p(x)}\ge \lambda\right\} \right) 
= \frac{n}{\lambda} \text{ \hspace{5mm} } \forall\lambda>0,$$
where $m$ denotes Lebesgue measure, then $p\in\lp_n$ with $\mu(p)\ge 1$.
\label{msrconjcon}
\end{conj}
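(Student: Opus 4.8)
The plan is to study the rational function $F(x):=\dfrac{p(x+1)-p(x)}{p(x)}$. Since $\deg\bigl(p(x+1)-p(x)\bigr)=n-1<\deg p$, we have $F(x)\to0$ as $|x|\to\infty$, so for each $\lambda>0$ the set $\{x\in\rl:F(x)\ge\lambda\}$ is bounded and $M(\lambda):=m(\{F\ge\lambda\})$ is finite. I would split the conclusion into three layers: (a) every zero of $p$ is real; then, writing $\alpha_1<\cdots<\alpha_n$ for the zeros and $A_k:=p(\alpha_k+1)/p'(\alpha_k)=\operatorname{Res}_{\alpha_k}F$, (b) $A_k\ge0$ for every $k$; then (c) $\mu(p)\ge1$. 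Layer (c) is elementary: if $\alpha_{l+1}-\alpha_l<1$, put $M_k:=\#\{j:\alpha_k<\alpha_j<\alpha_k+1\}$; reading off $\operatorname{sgn}p(\alpha_k+1)$ and $\operatorname{sgn}p'(\alpha_k)$ from the factorization of $p$ gives $A_k>0\iff M_k$ is even, while $A_k=0\iff\alpha_k+1\in\{\alpha_j\}$, so $A_l\ge0$ together with $M_l\ge1$ forces $M_{l+1}\ge1$; iterating contradicts $M_n=0$. Layer (b) follows from (a) as soon as we know $\sum_k|A_k|=n$ (see below), since then $\sum_k|A_k|=n=\sum_kA_k$ (the latter by Lemma~\ref{smABlm}) forces every $A_k\ge0$. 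So the real content is layer (a): a non-real zero of $p$ must break the identity $M(\lambda)\equiv n/\lambda$.

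The main tool I would use is a trace identity. Set $q_\lambda(x):=p(x+1)-(1+\lambda)p(x)$, a polynomial of degree $n$ with leading coefficient $-\lambda a_n$ (here $a_n$ is the leading coefficient of $p$); then $F(x)-\lambda=q_\lambda(x)/p(x)$, and at a simple zero $x_0$ of $q_\lambda$ we have $\operatorname{Res}_{x_0}\dfrac{p}{q_\lambda}=\dfrac{p(x_0)}{q_\lambda'(x_0)}=\dfrac{1}{F'(x_0)}$. Summing residues, using $\dfrac{p(z)}{q_\lambda(z)}=-\dfrac1\lambda+\dfrac{p(z+1)-p(z)}{\lambda\,q_\lambda(z)}$ (whose last term is $-\dfrac{n}{\lambda^2z}+O(z^{-2})$ since its numerator has degree $n-1$), and reading off the residue at infinity yields, for all but finitely many $\lambda\neq0$,
\[
\sum_{x_0\in\cn:\,F(x_0)=\lambda}\frac{1}{F'(x_0)}=-\frac{n}{\lambda^{2}},
\]
the sum being over all complex zeros of $q_\lambda$ (conjugate zeros contributing conjugate terms). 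On the other hand $M$ is monotone and, by hypothesis, equals $n/\lambda$, hence differentiable with $M'(\lambda)=-n/\lambda^2$; equating this with the level-set formula $M'(\lambda)=-\sum_{x_0\in\rl,\,F(x_0)=\lambda}1/|F'(x_0)|$ gives $\sum_{x_0\in\rl}1/|F'(x_0)|=n/\lambda^2$ for a.e.\ $\lambda>0$. (As a by-product, restricting the displayed identity to the real zeros in the case where all $A_k\ge0$ --- so that $F$ is strictly decreasing between consecutive poles --- gives a clean proof of Conjecture~\ref{msrconj}.)

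Next I would exploit the behaviour as $\lambda\to\infty$. Since $q_\lambda/(-\lambda)=p-\tfrac1\lambda\bigl(p(\cdot+1)-p\bigr)\to p$ uniformly on compacta, the zeros of $q_\lambda$ converge to those of $p$; and $F$ is bounded on compact subsets of $\rl$ away from its poles (the real zeros of $p$), so for large $\lambda$ the bounded set $\{F\ge\lambda\}$ is a union of short intervals clustered at the real poles of $F$. A local computation near a real zero $\alpha$ of multiplicity $d$ (write $p(x)=(x-\alpha)^dh(x)$, so $F(x)\sim\tfrac{p(\alpha+1)}{h(\alpha)}(x-\alpha)^{-d}$) shows its contribution to $M(\lambda)$ is of order $\lambda^{-1/d}$ --- incompatible with $M(\lambda)\equiv n/\lambda$ unless $d=1$ (the cases where $p$ has a multiple real zero with a non-contributing pole, or shares a zero with $p(\cdot+1)$, require a finer version of the same estimate, together with the exact value of $M$ as $\lambda\to0^+$). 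Hence every real zero of $p$ is simple, $F=\sum_jA_j/(x-\zeta_j)$ over the zeros $\zeta_j$ of $p$, and the same local analysis gives $\lim_{\lambda\to\infty}\lambda M(\lambda)=\sum_{\zeta_j\in\rl}|A_j|$; by hypothesis this limit is $n$, so $\sum_{\zeta_j\in\rl}|A_j|=n$ --- which, once layer (a) is established, delivers (b) and then (c).

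The hard part, which I expect to be the main obstacle, is layer (a): ruling out non-real zeros. The leading-order information above is not enough, because a non-real zero contributes only a \emph{bounded} term to $F$ (whereas a real one contributes an unbounded term), and one can arrange $\sum_{\zeta_j\in\rl}|A_j|$ to equal, or even exceed, $n$ while $p$ has non-real zeros; this is exactly where the problem is harder than the converse of Theorem~\ref{ctmsr}, in which each real zero of multiplicity $m$ contributes precisely $m/\lambda$ to the measure, so that the number of real zeros is forced to equal $n$. The proposal is to use the \emph{full} strength of $M(\lambda)\equiv n/\lambda$: expanding $\lambda M(\lambda)$ in powers of $1/\lambda$ about $\lambda=\infty$ must produce $n$ with every higher coefficient vanishing, and each such coefficient is an algebraic relation among the zeros of $p$ (the first nontrivial one forces $\sum_j|A_j|\beta_j=0$, where $\beta_j:=\lim_{x\to\zeta_j}\bigl(F(x)-A_j/(x-\zeta_j)\bigr)$). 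One then argues --- combining these relations with the trace identity above, in which the non-real zeros of $q_\lambda$ enter explicitly --- that they can hold for \emph{all} $\lambda$ only when $p$ has no non-real zeros. With (a) in hand, layers (b) and (c) complete the proof that $p\in\lp_n$ with $\mu(p)\ge1$.
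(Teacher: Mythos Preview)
The paper does not prove this statement: it is stated as an open \emph{conjecture} (Conjecture~\ref{msrconjcon}), alongside Conjectures~\ref{zspcconj}, \ref{msrconj}, and~\ref{lpcnj}, with no accompanying argument. There is therefore no ``paper's own proof'' to compare your proposal against.

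As for the proposal itself: your layers (b) and (c) are sound. The parity argument for (c) is correct (indeed $\operatorname{sgn}A_k=(-1)^{M_k}$ when $A_k\neq0$, and $M_k\ge1$ forces $M_{k+1}\ge1$ exactly as you describe), and the passage from $\sum_k|A_k|=n=\sum_kA_k$ to $A_k\ge0$ is immediate. The trace identity $\sum_{F(x_0)=\lambda}1/F'(x_0)=-n/\lambda^2$ and the local estimate $\lambda M(\lambda)\to\sum_{\zeta_j\in\rl}|A_j|$ are both correct, and your aside that the monotone case yields Conjecture~\ref{msrconj} is a nice observation.

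The genuine gap is layer (a), and you are candid about this. You propose to expand $\lambda M(\lambda)$ in powers of $1/\lambda$ and argue that the resulting infinite system of algebraic relations forces the non-real zeros to vanish, but you do not carry this out, and it is not clear that it can be carried out. The difficulty you identify is real: a pair of complex-conjugate zeros contributes a bounded smooth perturbation to $F$, which does not disturb the leading asymptotics of $M(\lambda)$, and one can certainly have $\sum_{\zeta_j\in\rl}|A_j|=n$ with non-real zeros present. Matching the full expansion of $M(\lambda)$ to $n/\lambda$ would give countably many constraints, but the zeros of $p$ are only finitely many parameters, so in principle this could work --- yet turning this into an actual argument requires controlling the expansion coefficients explicitly and showing the resulting system has no solutions with non-real zeros. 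That is the heart of the problem, and your proposal stops short of it. Until layer (a) is completed, this remains a plan rather than a proof; the conjecture stays open.
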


\section{Extension to a Class of Transcendental Entire Functions}
\label{extlp}

In analogy with \eqref{fndef} we define, for a real entire function $\varphi$,
\beq
f_{\infty}(x,h,\varphi):=[\varphi(x+h)-\varphi(x-h)]^2 - 4\varphi(x)[\varphi(x+h)-2\varphi(x)+\varphi(x-h)].
\label{trdli}
\eeq
For $\varphi\in\lp$, with zeros $\{\alpha_i\}_{i=1}^{\omega}$, $\omega\le\infty$, we introduce the mesh size 
\beq
\mu_\infty(\varphi) := \inf_{i\ne j} |\alpha_i - \alpha_j|.
\eeq
We remark that if $\psi\notin\lp$, then $\psi$ need not satisfy $f_{\infty}(x,h,\psi)\ge 0$ for all $x\in\rl$.
A calculation shows that if $\psi(x)=e^{x^2}$, then $f_{\infty}(0,1,\psi) = -8(e-1) <0$.
When $\varphi\in\lp_n$, $f_{\infty}(x,h,\varphi)\ge 0$ for all $x\in\rl$ by Theorem \ref{mainthm}.  In order to extend Theorem \ref{mainthm} to transcendental entire functions,
we require the following preparatory result to ensure that the approximating polynomials we use will satisfy a zero spacing condition.

\begin{lem}
For any $a\in\rl$, $n\in\mathbb{N}$, $n\ge 2$,
 $$\lim_{n\to\infty} \sum_{k=1}^{n^n} \frac{1}{n\ln(n)(k+n) + a} = 1.$$
\label{sumlem}
\end{lem}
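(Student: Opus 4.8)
The plan is to peel off the additive constant $a$, which is asymptotically negligible, and then reduce the claim to an elementary harmonic-sum estimate.

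First I would observe that once $n\ln(n)(n+1)>|a|$ --- which holds for all large $n$ --- every denominator $n\ln(n)(k+n)+a$ with $k\ge1$ is positive, so the sum is a finite sum of positive terms. Put $\varepsilon_n:=|a|/\bigl(n\ln(n)(n+1)\bigr)$, so that $\varepsilon_n\to0$ and $\varepsilon_n<1$ for large $n$. For each $k\ge1$ the quantity $t_{n,k}:=a/\bigl(n\ln(n)(k+n)\bigr)$ satisfies $|t_{n,k}|\le\varepsilon_n$, and from $\frac{1}{n\ln(n)(k+n)+a}=\frac{1}{n\ln(n)(k+n)}\cdot\frac{1}{1+t_{n,k}}$ together with $\frac{1}{1+\varepsilon_n}\le\frac{1}{1+t_{n,k}}\le\frac{1}{1-\varepsilon_n}$, I obtain, after summing over $1\le k\le n^n$,
$$\frac{1}{1+\varepsilon_n}\,T_n\ \le\ \sum_{k=1}^{n^n}\frac{1}{n\ln(n)(k+n)+a}\ \le\ \frac{1}{1-\varepsilon_n}\,T_n,\qquad\text{where}\quad T_n:=\frac{1}{n\ln(n)}\sum_{k=1}^{n^n}\frac{1}{k+n}.$$

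Next I would estimate $T_n$ by comparing the inner sum with integrals of the decreasing function $x\mapsto 1/(x+n)$:
$$\ln\frac{n^n+n+1}{n+1}=\int_{1}^{\,n^n+1}\frac{dx}{x+n}\ \le\ \sum_{k=1}^{n^n}\frac{1}{k+n}\ \le\ \int_{0}^{\,n^n}\frac{dx}{x+n}=\ln\frac{n^n+n}{n}.$$
For $n\ge2$ the right-hand side equals $\ln\bigl(n^{n-1}+1\bigr)\le(n-1)\ln n+\ln 2$, while the left-hand side is at least $\ln\bigl(n^{n}/(n+1)\bigr)\ge(n-1)\ln n-\ln 2$. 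Dividing through by $n\ln n$ gives
$$\frac{n-1}{n}-\frac{\ln 2}{n\ln n}\ \le\ T_n\ \le\ \frac{n-1}{n}+\frac{\ln 2}{n\ln n},$$
so $T_n\to1$. Feeding this back into the previous display, together with $\varepsilon_n\to0$, and squeezing, yields $\lim_{n\to\infty}\sum_{k=1}^{n^n}\frac{1}{n\ln(n)(k+n)+a}=1$.

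I do not expect a genuine obstacle here. The one point that needs care is handling $a$ uniformly in $k$: its relative contribution to the $k$-th term is largest at $k=1$, which is exactly what the single quantity $\varepsilon_n$ is designed to capture, and one must confirm positivity of all denominators before writing any of these inequalities. The crude estimate $n+1\le 2n$, used twice, is the only place the hypothesis $n\ge2$ enters, and it is harmless since only an eventual bound is needed.
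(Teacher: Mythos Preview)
Your argument is correct and follows essentially the same route as the paper's: both sandwich the sum between two integrals of a decreasing function and evaluate the resulting logarithms to see that the bounds tend to $1$. The only cosmetic difference is that the paper leaves $a$ inside the integrand and computes $\frac{1}{n\ln n}\ln\!\left(\frac{n^n+n+a/(n\ln n)}{n+a/(n\ln n)}\right)$ directly, whereas you first strip off $a$ via the uniform multiplicative factor $1/(1\pm\varepsilon_n)$ and then integrate the cleaner $1/(x+n)$; your version is a touch tidier but not materially different.
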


\begin{proof}Fix $a\in\rl$.
Since the terms $\frac{1}{n\ln(n)(k+n) + a}$ are decreasing with 
$k$ for $n$ sufficiently large, we obtain

$$\int_1^{n^n+1} \frac{1}{n\ln(n)(k+n) + a} dk \le \sum_{k=1}^{n^n} \frac{1}{n\ln(n)(k+n) + a} \le \int_0^{n^n} \frac{1}{n\ln(n)(k+n) + a} dk,$$
\noindent
for $n$ sufficiently large, by considering the approximating Riemann sums for the integrals.  Thus

\beq
\frac{1}{n\ln(n)}\ln\left(\frac{n^n+1+\frac{a}{n\ln(n)}}{n+1+\frac{a}{n\ln(n)}}\right) \le \sum_{k=1}^{n^n} \frac{1}{n\ln(n)(k+n) + a} \le \frac{1}{n\ln(n)}\ln\left(\frac{n^n+\frac{a}{n\ln(n)}}{n+\frac{a}{n\ln(n)}}\right).\label{starry}
\eeq
As $n\to\infty$, both the left and right sides of \eqref{starry} approach $1$, and whence the sum in the middle approaches $1$.
\end{proof}

\begin{lem}
The set of polynomials $\left\{q_n(x) = \prod_{k=1}^{n^n}\left( 1 + \frac{x}{n\ln(n)(k+n)}\right) \text{:} n\in\mathbb{N}\text{, }n\ge 2\right\}$, forms a normal family on $\mathbb{C}$.  There is a subsequence of $\{q_n(x)\}_{n=2}^\infty$ which converges uniformly on compact subsets of $\cn$ to $e^x$.   
\label{simpPolys}
\end{lem}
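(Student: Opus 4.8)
The plan is to obtain normality from Montel's theorem through a locally uniform bound on $\{q_n\}$, and then to identify the limit function by a direct computation with $\log q_n$ in which Lemma \ref{sumlem} does the essential work. First I would bound the family on each closed disk $|x|\le R$. Using $1+t\le e^t$,
\[
|q_n(x)|\le\prod_{k=1}^{n^n}\left(1+\frac{R}{n\ln(n)(k+n)}\right)\le\exp\!\left(R\sum_{k=1}^{n^n}\frac{1}{n\ln(n)(k+n)}\right),
\]
and by Lemma \ref{sumlem} with $a=0$ the sum in the exponent tends to $1$, hence is at most $2$ for all $n\ge N_R$; the finitely many earlier $q_n$ are polynomials and so are bounded on $|x|\le R$. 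Thus $\{q_n\}$ is uniformly bounded on every closed disk, and Montel's theorem gives that it is a normal family on $\cn$.

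Next I would show $q_n(x)\to e^x$ for each fixed $x\in\cn$. Since $n\ln(n)(k+n)\ge n(n+1)\ln(n)\to\infty$, for $n$ large enough each quantity $w_{n,k}:=\frac{x}{n\ln(n)(k+n)}$ satisfies $|w_{n,k}|\le\tfrac12$, so $\log q_n(x)=\sum_{k=1}^{n^n}\log(1+w_{n,k})$ is well defined with the principal branch and $q_n(x)=\exp(\log q_n(x))$. Applying the elementary estimate $|\log(1+w)-w|\le|w|^2$, valid for $|w|\le\tfrac12$, gives
\[
\left|\log q_n(x)-x\sum_{k=1}^{n^n}\frac{1}{n\ln(n)(k+n)}\right|\le|x|^2\sum_{k=1}^{n^n}\frac{1}{\bigl(n\ln(n)(k+n)\bigr)^2}\le\frac{|x|^2}{(n\ln n)^2}\sum_{j=n+1}^{\infty}\frac{1}{j^2}\le\frac{|x|^2}{n(n\ln n)^2},
\]
which tends to $0$ as $n\to\infty$, while Lemma \ref{sumlem} (again with $a=0$) makes the remaining sum tend to $1$. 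Hence $\log q_n(x)\to x$, that is, $q_n(x)\to e^x$.

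Finally I would combine the two steps. Given any subsequence of the normal family $\{q_n\}$, it has a further subsequence converging uniformly on compact subsets of $\cn$ to some entire function, and by the pointwise limit just established that function must be $e^x$; in particular a subsequence of $\{q_n\}$ converging to $e^x$ exists, which is the assertion of the lemma. (By Vitali's theorem the pointwise convergence on $\cn$ together with local boundedness in fact gives $q_n\to e^x$ uniformly on compact subsets of $\cn$, without passing to a subsequence.)

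I do not anticipate a real obstacle; the whole argument is powered by Lemma \ref{sumlem}, which is arranged precisely so that the product length $n^n$ and the scaling factor $n\ln n$ conspire to make the normalizing constant of $q_n$ approach $1$. The only points requiring attention are keeping the factors bounded away from $0$ so that the logarithm is legitimate, and checking that the quadratic and higher-order contributions of $\log(1+w)$ are negligible in the limit --- both are immediate from the $(n\ln n)^{-2}$ decay displayed above.
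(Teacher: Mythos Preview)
Your proof is correct and follows the same overall plan as the paper---bound uniformly on disks, invoke Montel, then identify the limit---but the identification step is handled differently. The paper extracts a convergent subsequence $q_{n_j}\to f$, then computes the logarithmic derivative
\[
\frac{f'(x)}{f(x)}=\lim_{n\to\infty}\frac{q_n'(x)}{q_n(x)}=\lim_{n\to\infty}\sum_{k=1}^{n^n}\frac{1}{n\ln(n)(k+n)+x}=1,
\]
using Lemma~\ref{sumlem} with $a=x$, and concludes $f=e^x$ from $f'/f=1$, $f(0)=1$. You instead expand $\log q_n(x)$ directly, control the second-order remainder by the $(n\ln n)^{-2}$ decay, and need Lemma~\ref{sumlem} only for $a=0$. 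Your route is a bit more elementary (no ODE, no appeal to the full generality of the preceding lemma) and, as you note, actually yields convergence of the whole sequence via Vitali or the sub-subsequence argument, not merely of a subsequence. The paper's route, on the other hand, explains why Lemma~\ref{sumlem} was stated for arbitrary real $a$.
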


\begin{proof}
Let $K\subset\cn$ be any compact set and let $R = \sup_{z\in K}|z|$.  Recall the inequality 
$$\frac{1}{2}|z| \le |\ln(1+z)| \le \frac{3}{2}|z| \text{ \hspace{5mm} for } |z | < \frac{1}{2}$$ \cite[p. 165]{CJ}.  
Then for $n > 2R$, $\left|\frac{z}{n\ln(n)(k+n)}\right|<\frac{1}{2}$,  hence, for $k\ge1$ and $z\in K$
$$\frac{1}{2}\frac{|z|}{n\ln(n)(k+n)} \le \left|\ln\left(1 + \frac{z}{n\ln(n)(k+n)} \right)\right| \le \frac{3}{2}\frac{|z|}{n\ln(n)(k+n)},$$
and therefore
$$\frac{1}{2}\sum_{k=1}^{n^n}\frac{|z|}{n\ln(n)(k+n)} \le \sum_{k=1}^{n^n}\left|\ln\left(1 + \frac{z}{n\ln(n)(k+n)} \right)\right| \le \frac{3}{2}\sum_{k=1}^{n^n}\frac{|z|}{n\ln(n)(k+n)}.$$
As $n\to\infty$ the sums on the left and right sides of the inequality converge by Lemma \ref{sumlem} to $\frac{1}{2}|z|$ and  $\frac{3}{2}|z|$  respectively.  In particular, for some $\varepsilon >0$  and $N>2R$ sufficiently large, for all $n\ge N$ and for all $z\in K$,
$$ \sum_{k=1}^{n^n}\left|\ln\left(1 + \frac{z}{n\ln(n)(k+n)} \right)\right| \le \frac{3}{2}R + \varepsilon .$$
Then for all $n\ge N$, for all $z\in K$,
$$\left| q_n(z) \right| 
\le  e^{\sum_{k=1}^{n^n}\left|\ln\left(1 + \frac{z}{n\ln(n)(k+n)} \right)\right|} 
\le e^{\frac{3}{2}R + \varepsilon }.$$
So for $n>N$ sufficiently large, the sequence $\{q_n(z)\}_{n=2}^\infty$ is uniformly bounded on compact subsets $K\subset\cn$ and thus form a normal family by Montel's theorem \cite[p. 153]{CJ}.  Thus, there is a subsequence of $\{q_n(z)\}_{n=2}^\infty$ which converges uniformly on compact subsets of $\cn$ to a function $f$, and therefore satisfies 
\beq
\frac{f'(x)}{f(x)} = \lim_{n\to\infty} \frac{q'_n(x)}{q_n(x)} 
= \lim_{n\to\infty} \sum_{k=1}^{n^n} \frac{1}{n\ln(n)(k+n) + x} = 1, \label{lgdr}
\eeq
for a fixed $x\in\rl$, where the last equality is by Lemma \ref{sumlem}.    
Equation \eqref{lgdr} and $f(0) = 1$, imply $f(x) = e^x$ on $\rl$, and thus $f$ is the exponential function.
\end{proof}

\begin{lem}
If $\varphi(x)=p(x)e^{b x}$, $b\in\rl$, $p\in\lp_n$, $n\ge2$, and $\mu(p)\ge1$, then $f_{\infty}(x,1,\varphi)\ge0$ for all $x\in\rl$.
\label{pexp}
\end{lem}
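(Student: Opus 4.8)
The plan is to exploit the fact that $f_\infty(x,1,\varphi)$ is, up to a positive factor, a limit of Krasikov-type expressions for polynomials to which Theorem~\ref{mainthm} applies. The key observation is that $f_\infty$ behaves well under multiplication by an exponential. Concretely, one computes that for $\varphi(x) = p(x)e^{bx}$,
\begin{align*}
\varphi(x+1) - \varphi(x-1) &= e^{bx}\bigl(e^{b}p(x+1) - e^{-b}p(x-1)\bigr),\\
\varphi(x+1) - 2\varphi(x) + \varphi(x-1) &= e^{bx}\bigl(e^{b}p(x+1) - 2p(x) + e^{-b}p(x-1)\bigr),
\end{align*}
so that $f_\infty(x,1,\varphi) = e^{2bx}\, g(x)$ where $g$ is built from $p$ but with the shifted values weighted by $e^{\pm b}$. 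Since $e^{2bx}>0$, it suffices to show $g(x)\ge 0$ for all $x\in\rl$.

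The natural way to produce the weights $e^{\pm b}$ from a polynomial inequality is to approximate $e^{bx}$ by the normalized polynomials of Lemma~\ref{simpPolys}: after the substitution $x\mapsto bx$ (and handling $b<0$ by symmetry, or $b=0$ trivially), a subsequence of the $q_n$ converges uniformly on compact subsets of $\cn$ to $e^{bx}$. First I would set $P_n(x) := p(x)\,q_n(x)$ (with the appropriate rescaling so that the product of the zeros' spacings works out). The crucial point is that $P_n \in \lp_{n + \deg p}$ and, because $p$ has $\mu(p)\ge 1$ while the zeros of $q_n$ are at $-n\ln(n)(k+n)$ for $k=1,\dots,n^n$ — which are spaced $n\ln(n)\to\infty$ apart and, for $n$ large, lie far to the left of the zeros of $p$ — we get $\mu(P_n)\ge 1$ for all sufficiently large $n$. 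Hence Theorem~\ref{mainthm} gives $f_{n+\deg p}(x,1,P_n)\ge 0$ for all $x\in\rl$.

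Next I would pass to the limit. Since $P_n = p\cdot q_n \to p(x)e^{bx} = \varphi(x)$ uniformly on compact subsets of $\cn$ (along the subsequence), and $f_N(x,1,P_n)$ is a fixed polynomial expression in the three values $P_n(x\pm 1), P_n(x)$ divided by nothing problematic, the only subtlety is the degree-dependent coefficient: $f_N(x,1,P_n) = (N-1)[\cdots]^2 - 4N P_n(x)[\cdots]$ with $N = n + \deg p \to \infty$. Dividing by $N$ and letting $n\to\infty$ kills the "$-1$" and the factor in front of the first bracket becomes $1$, yielding exactly
\[
0 \le \lim_{n\to\infty}\frac{f_{N}(x,1,P_n)}{N} = [\varphi(x+1)-\varphi(x-1)]^2 - 4\varphi(x)[\varphi(x+1)-2\varphi(x)+\varphi(x-1)] = f_\infty(x,1,\varphi)
\]
for each fixed $x\in\rl$, which is the assertion. (Here $[\varphi(x+1)-\varphi(x-1)]^2/N \to 0$, so in fact the first bracket's contribution after dividing by $N$ vanishes in the limit — one must be slightly careful and instead write $f_N/N = \frac{N-1}{N}[\cdots]^2 - 4[\cdots]$, whose limit is $[\varphi(x+1)-\varphi(x-1)]^2 - 4\varphi(x)[\cdots] = f_\infty$, since $\frac{N-1}{N}\to 1$; the point is simply that the degree $N$ in the $(n-1)$ vs $n$ discrepancy washes out.)

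The main obstacle I anticipate is the bookkeeping around the zero-spacing of the product $P_n = p\cdot q_n$ together with the rescaling needed to approximate $e^{bx}$ rather than $e^x$: one must verify that after rescaling $x$ by $|b|$, the zeros of the rescaled $q_n$ are still spaced at least $1$ apart \emph{and} sufficiently separated from the (rescaled) zeros of $p$, uniformly for large $n$, so that $\mu(P_n)\ge 1$ genuinely holds. Since the $q_n$-zeros march off to $-\infty$ with gaps $n\ln(n)\to\infty$, this is true for all large $n$, but it should be stated carefully; the case $b=0$ is immediate from Theorem~\ref{mainthm}, and the case $b<0$ follows from the case $b>0$ applied to $\varphi(-x)$, using that $f_\infty(-x,1,\varphi(-\cdot)) = f_\infty(x,1,\varphi)$.
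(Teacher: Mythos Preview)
Your proposal is correct and follows essentially the same route as the paper: approximate $e^{bx}$ by the polynomials $q_n(bx)$ from Lemma~\ref{simpPolys}, check that for large $n$ the product $p\cdot q_n(b\,\cdot)$ lies in $\lp$ with mesh $\ge 1$, apply Theorem~\ref{mainthm}, and pass to the limit. Two small remarks: the degree should be $N=n^{n}+\deg p$, not $n+\deg p$; and the paper bypasses your $f_N/N$ limit by using the algebraic identity $f_N = N f_\infty - [\,\cdot\,]^2$, so that $f_N\ge 0$ already gives $f_\infty\ge 0$ at each finite stage before taking $m\to\infty$.
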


\begin{proof}
By Lemma \ref{simpPolys}, there is a subsequence of 
$\left\{q_j(x) = \prod_{k=1}^{j^j}\left(1 + \frac{x}{j\ln(j)(k+j)}\right)\right\}_{j=2}^\infty$,
call it $\{q_{j_m}(x)\}_{m=1}^\infty$, 
such that $q_{j_m}(x)\to e^{x}$ uniformly on compact subsets of $\cn$, as $m\to\infty$.
Let $\{\alpha_k\}_{k=1}^n$ be the zeros of $p(x)$, and 
$\displaystyle R=\max\limits_{1\le k\le n} |\alpha_k|.$
The zero of least magnitude of $q_{j_m}(bx)$, $z_{j_m}$, satisfies $|z_{j_m}|=\frac{j_m\ln(j_m)(1+j_m)}{b}$, $b\neq 0$.  
Both $\mu(q_{j_m}(bx))\to\infty$ as $m\to\infty$ 
and $|z_{j_m}|\to\infty$ as $m\to\infty$.  Thus, there is an $M$ such that for all $m>M$,  $|z_{j_m}| > R + 1$, and the sequence of polynomials $h_m(x) = p(x)q_{j_{M+m}}(bx)$, $m\ge 1$, is in $\lp_\ell$ for some $\ell$, and satisfies $\mu(h_m)\ge 1$.  By Theorem \ref{mainthm}, $f_\infty(x,1,h_m)\ge 0$ for all $x\in\rl$, for all $m$. Since $h_m\to p(x)e^{bx}$ by construction, $\lim_{m\to\infty} f_\infty(x,1,h_m)= f_\infty(x,1,p(x)e^{bx}) \ge 0$.  
\end{proof}

\begin{thm}
If $\varphi\in\lp$ has order $\rho<2$, or if $\varphi$ is of minimal type of order $\rho=2$,  and $\mu_{\infty}(\varphi)\ge 1$, then
$f_{\infty}(x,1,\varphi)\ge 0$ for all $x\in\rl$.
\label{ccte}
\end{thm}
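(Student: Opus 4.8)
The plan is to deduce Theorem~\ref{ccte} from Lemma~\ref{pexp} by a normal families / limiting argument, exactly in the spirit of the passage from polynomials to $\lp$ that is used throughout the subject. Write $\varphi\in\lp$ in its Hadamard factorization
\[
\varphi(x) = cx^m e^{-ax^2+bx}\prod_{k=1}^{\omega}\left(1+\frac{x}{x_k}\right)e^{-x/x_k}.
\]
The hypothesis that $\varphi$ has order $\rho<2$, or order $\rho=2$ of minimal type, forces $a=0$; this is the classical fact that a canonical product of genus $\le 1$ together with an exponential factor $e^{-ax^2+bx}$ has order $2$ and positive type precisely when $a>0$. Hence $\varphi(x)=cx^m e^{bx}\prod_{k=1}^\omega(1+x/x_k)e^{-x/x_k}$, a locally uniform limit of its partial products $\varphi_N(x):=cx^m e^{bx}\prod_{k=1}^N(1+x/x_k)e^{-x/x_k}$. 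Each $\varphi_N$ is of the form $p_N(x)e^{b_N x}$ with $p_N\in\lp_{m+N}$ and $b_N=b-\sum_{k=1}^N 1/x_k\in\rl$.

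The main obstacle is that the truncated products $\varphi_N$ need not satisfy the zero-spacing hypothesis $\mu\ge 1$: deleting factors $1+x/x_k$ for $k>N$ can move the retained zeros closer together, and the spurious zeros introduced by $e^{-x/x_k}$ being replaced by a polynomial do not appear (there are none), but the convergence $x_k\to\infty$ only guarantees the gap between consecutive zeros is eventually large, not that it is $\ge 1$ for every $N$. The standard fix, which I would carry out here, is to rescale: since $\mu_\infty(\varphi)\ge 1$ the zeros of $\varphi$ are already at least $1$ apart, so it suffices to approximate $\varphi$ by functions of the form (polynomial in $\lp$ with mesh $\ge 1$)$\times$(real exponential), and then invoke Lemma~\ref{pexp}. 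Concretely, order the nonzero $x_k$ by increasing modulus and note that for each fixed $N$ the finitely many zeros $\{0,-x_1,\dots,-x_N\}$ are a sub-multiset of the zeros of $\varphi$, hence themselves have all pairwise gaps $\ge 1$; so $p_N\in\lp_{m+N}$ with $\mu(p_N)\ge 1$. Thus $\varphi_N=p_N e^{b_N x}$ meets the hypotheses of Lemma~\ref{pexp} (after discarding the finitely many $N$ with $m+N<2$), giving $f_\infty(x,1,\varphi_N)\ge 0$ for all $x\in\rl$ and all such $N$.

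It then remains to pass to the limit. Because $\varphi_N\to\varphi$ uniformly on compact subsets of $\cn$ (the product $\prod(1+x/x_k)e^{-x/x_k}$ converges locally uniformly, this being exactly the content of $\sum 1/x_k^2<\infty$ in Definition~\ref{lp}), for each fixed $x\in\rl$ we have $\varphi_N(x\pm 1)\to\varphi(x\pm 1)$ and $\varphi_N(x)\to\varphi(x)$. Since $f_\infty(x,1,\cdot)$ is a fixed continuous (indeed polynomial) function of the three values $\varphi(x-1),\varphi(x),\varphi(x+1)$, we get
\[
f_\infty(x,1,\varphi)=\lim_{N\to\infty} f_\infty(x,1,\varphi_N)\ge 0,
\]
and as $x\in\rl$ was arbitrary, $f_\infty(x,1,\varphi)\ge 0$ for all $x\in\rl$. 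I expect the only genuinely delicate point to be the justification that the order/type hypothesis eliminates the Gaussian factor $e^{-ax^2}$; everything else is the routine "approximate a $\lp$ function by its truncated Hadamard products and use local uniform convergence" template, combined with the observation that sub-multisets of a set with gaps $\ge 1$ again have gaps $\ge 1$, which is what lets Lemma~\ref{pexp} apply at each finite stage.
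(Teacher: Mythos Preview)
Your proposal is correct and follows essentially the same route as the paper: use the order/type hypothesis to kill the Gaussian factor, truncate the Hadamard product to get functions of the form $p_N(x)e^{b_N x}$ with $p_N\in\lp$, apply Lemma~\ref{pexp}, and pass to the limit. One remark: your intermediate worry that ``deleting factors can move the retained zeros closer together'' is unfounded (the zeros of $p_N$ are literally a subset of the zeros of $\varphi$, so nothing moves), and the mention of rescaling is a red herring---the subset observation you eventually make is exactly what is needed, and indeed is a detail the paper leaves implicit.
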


\begin{proof}
By the Hadamard factorization theorem, $\varphi$ has the representation
$$\varphi(x) = cx^me^{bx}\prod_{k=1}^\omega \left(1+\frac{x}{a_k}\right)e^{-\frac{x}
{a_k}}  \text{ \hspace{5mm} }(\omega\le\infty),$$
where $a_k,b,c\in\rl$, $m$ is a non-negative integer, $a_k\neq 0$, and $\sum_{k=1}^\omega \frac{1}{a_k^2}<\infty$.  Let 
$$g_n(x) = cx^me^{bx}\prod_{k=1}^n \left(1+\frac{x}{a_k}\right)e^{-\frac{x}{a_k}}.$$
Then, $g_n(x) = ce^{bx-\sum_{k=1}^n\frac{x}{a_k}}x^m\prod_{k=1}^n \left(1+\frac{x}{a_k}\right)$ has the form $p(x)e^{\gamma x}$, $\gamma\in\rl$, $p\in\lp_n$, and thus by Lemma \ref{pexp}, $f_\infty(x,1,g_n)\ge 0$ for all $x\in\rl$, and for all $n$.  Since we also have $g_n\to \varphi$ by construction,   
$\lim_{n\to\infty}f_\infty(x,1,g_n) = f_\infty(x,1,\varphi)\ge 0$ for all $x\in\rl$. 
\end{proof}
In light of Theorem \ref{ccte}, we make the following conjecture.  
\begin{conj}
If $\varphi\in\lp$ and $\mu_{\infty}(\varphi) \ge 1$ then
$f_{\infty}(x,1,\varphi)\ge 0$ for all $x\in\rl$.
\label{lpcnj}
\end{conj}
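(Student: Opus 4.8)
\emph{Approach.} I would try to prove Conjecture~\ref{lpcnj} directly, avoiding the polynomial approximation used for Theorem~\ref{ccte}. The starting point is that Lemma~\ref{mltid} holds verbatim for an arbitrary real entire $\varphi$, and feeding it into the $f_\infty$-analogue of the factorization in the proof of Theorem~\ref{mainthm} yields, for every real entire $\varphi$ and every $x\in\rl$, the identity
\begin{equation*}
f_\infty(x,1,\varphi)=\bigl(\varphi(x+1)-2\varphi(x)+\varphi(x-1)\bigr)^2+4\bigl(\varphi(x)^2-\varphi(x+1)\varphi(x-1)\bigr).
\end{equation*}
(This correctly reproduces $f_\infty(0,1,e^{x^2})=-8(e-1)<0$, so it is membership in $\lp$, not the identity, that must carry the proof.) Since the first summand is always $\ge 0$, the whole problem reduces to bounding the ``discrete Tur\'an'' term $\varphi(x)^2-\varphi(x+1)\varphi(x-1)$ from below, for $\varphi\in\lp$ with $\mu_\infty(\varphi)\ge1$.

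\emph{The easy ranges.} If $\varphi(x+1)\varphi(x-1)\le 0$ then $\varphi(x)^2-\varphi(x+1)\varphi(x-1)\ge\varphi(x)^2\ge 0$, so $f_\infty(x,1,\varphi)\ge 0$. So assume $\varphi(x+1)\varphi(x-1)>0$; then $\varphi(x\pm1)\ne 0$ with a common sign. If in addition $\varphi$ has no zero in $[x-1,x+1]$, then $\varphi$ keeps that sign on the whole interval, and since every $\varphi\in\lp$ satisfies the Laguerre inequality $(\varphi')^2-\varphi''\varphi\ge 0$ on $\rl$ \cite{CC}, we get $(\log|\varphi|)''=-\bigl((\varphi')^2-\varphi''\varphi\bigr)/\varphi^2\le 0$ on the zero-free interval $[x-1,x+1]$; thus $\log|\varphi|$ is concave there, and midpoint concavity gives $\varphi(x)^2\ge|\varphi(x-1)\varphi(x+1)|=\varphi(x+1)\varphi(x-1)$, so the Tur\'an term is $\ge 0$ and $f_\infty(x,1,\varphi)\ge 0$.

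\emph{The remaining case.} It remains to treat $\varphi(x+1)\varphi(x-1)>0$ with a zero of $\varphi$ in $(x-1,x+1)$. Here $\mu_\infty(\varphi)\ge1$ forces the zeros of $\varphi$ to be simple and at least $1$ apart, so the length-$2$ open interval $(x-1,x+1)$ contains at most two of them; and since $\varphi(x-1),\varphi(x+1)$ share a sign, $\varphi$ changes sign an even number of times there, so there are \emph{exactly} two zeros $\beta_1<\beta_2$, with $\beta_2-\beta_1\ge1$. Then $x-1<\beta_1$, $\beta_2<x+1$ and $\beta_2-\beta_1\ge1$ force $\beta_1<x<\beta_2$ (so $\varphi(x)\ne 0$), and, as $\varphi$ has no zero in $[x-1,\beta_1)$ or in $(\beta_2,x+1]$, $\varphi(x)$ has sign opposite to the common sign of $\varphi(x\pm1)$. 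Writing $\varphi(x+1)=sU$, $\varphi(x-1)=sW$, $\varphi(x)=-sV$ with $U,W\ge 0$, $V>0$, $s=\pm1$, the identity above collapses to $(U+2V+W)^2+4(V^2-UW)=(U-W)^2+8V^2+4V(U+W)\ge 0$. Boundary configurations, where $x$, $x-1$ or $x+1$ is itself a zero, either fall under the range $\varphi(x+1)\varphi(x-1)\le 0$ or make the identity a manifest square.

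\emph{Main obstacle.} The algebra is routine; the two real ingredients are the identity of the first step and the log-concavity of $\lp$-functions on zero-free intervals. The delicate point — and the only place the hypothesis is genuinely needed — is the combinatorial bookkeeping in the last case: $\mu_\infty(\varphi)\ge1$ is exactly what caps $(x-1,x+1)$ at two zeros and forces them to straddle $x$ with the favorable sign pattern. For mesh below $1$ one can pack four zeros into the interval so that $\varphi(x-1),\varphi(x),\varphi(x+1)$ all share a sign while $\varphi(x)^2-\varphi(x+1)\varphi(x-1)$ is strongly negative (for instance $p(x)=(x^2-\tfrac1{25})(x^2-\tfrac9{25})$ at $x=0$ gives $f_\infty<0$), so the inequality genuinely fails there. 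This also explains why the approximation argument behind Theorem~\ref{ccte} cannot reach the open cases on its own: a Gaussian factor $e^{-ax^2}$ with any $a>0$ is not a locally uniform limit of polynomials whose real zeros are spaced $\ge1$ apart — once such zeros escape a fixed disk, $\sum 1/\xi_j^2$ is forced toward $0$ — and only a direct argument of the above kind avoids that obstruction.
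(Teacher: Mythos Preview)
The paper does \emph{not} prove this statement --- it is explicitly posed as an open conjecture, so there is no ``paper's own proof'' to compare against. What you have submitted is therefore not an alternative proof but a purported resolution of the conjecture, and it appears to be correct.

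Your algebraic identity
\[
f_\infty(x,1,\varphi)=\bigl(\varphi(x+1)-2\varphi(x)+\varphi(x-1)\bigr)^2+4\bigl(\varphi(x)^2-\varphi(x+1)\varphi(x-1)\bigr)
\]
checks out (both sides equal $a^2+8b^2+c^2-4ab-2ac-4bc$ with $a=\varphi(x+1)$, $b=\varphi(x)$, $c=\varphi(x-1)$), and the three-case split is exhaustive. Case~1 is trivial. Case~2 is the log-concavity argument: since the Laguerre inequality $(\varphi')^2-\varphi''\varphi\ge 0$ holds for \emph{every} $\varphi\in\lp$ with no growth restriction, $\log|\varphi|$ is concave on any zero-free interval and midpoint concavity gives the discrete Tur\'an inequality there. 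Case~3 is the only place the mesh hypothesis enters, and your counting is right: $\mu_\infty(\varphi)\ge 1$ forces simple zeros, so under $\varphi(x-1)\varphi(x+1)>0$ the open interval $(x-1,x+1)$ of length $2$ contains an even number of zeros, at most two, hence exactly two, and the spacing $\ge 1$ pins one strictly below $x$ and one strictly above. The sign substitution then reduces $f_\infty$ to $(U-W)^2+8V^2+4V(U+W)\ge 0$.

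Compared with the paper's partial result (Theorem~\ref{ccte}), which reaches only order $<2$ or minimal type of order $2$ by approximating $e^{bx}$ with mesh-controlled polynomials, your argument is both simpler and strictly stronger: it bypasses approximation entirely, uses only the unrestricted Laguerre inequality for $\lp$, and therefore covers the full class including the Gaussian factor $e^{-ax^2}$ that the approximation route cannot reach. Your closing remark about why polynomial approximation fails for $a>0$ is also correct and explains precisely why the paper stopped short. In short: you have not reproduced the paper's proof --- you have proved its conjecture.
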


\renewcommand\refname{References} 

%

\end{document}